\documentclass[amstex,12pt, amssymb]{article}

\usepackage{mathtext}
\usepackage[cp1251]{inputenc}
\usepackage[T2A]{fontenc}
\usepackage[dvips]{graphicx}
\usepackage{amsmath}
\usepackage{amssymb}
\usepackage{amsxtra}
\usepackage{latexsym}
\usepackage{ifthen}

\textheight245mm \textwidth165mm
\parindent5mm
\parskip1mm

\voffset-27.3mm \hoffset-11.5mm

\newcounter{lemma}[section]

\newcounter{corollary}[section]

\newcounter{remark}[section]

\newcounter{theorem}[section]

\newcounter{proposition}[section]

\newcounter{example}

\numberwithin{equation}{section}

\pagestyle{myheadings}

\begin{document}

\markboth{E~.SEVOST'YANOV, V.~TARGONSKII}{\centerline{AN ANALOGUE OF
KOEBE'S THEOREM ...}}

\def\cc{\setcounter{equation}{0}
\setcounter{figure}{0}\setcounter{table}{0}}

\overfullrule=0pt


\author{EVGENY SEVOST'YANOV, VALERY TARGONSKII}

\title{
{\bf AN ANALOGUE OF KOEBE'S THEOREM AND THE OPENNESS OF A LIMIT MAP
IN ONE CLASS}}

\date{\today}
\maketitle

\begin{abstract}
We study mappings that satisfy the inverse modulus inequality of
Poletsky type in a fixed domain. It is shown that, under some
additional restrictions, the image of a ball under such mappings
contains a fixed ball uniformly over the class. This statement can
be interpreted as the well-known analogue of Koebe's theorem for
analytic functions. As an application of the obtained result, we
show that, if a sequence of mappings belonging to the specified
class converges locally uniformly, then the limit mapping is open.
\end{abstract}

\bigskip
{\bf 2010 Mathematics Subject Classification: Primary 30C65;
Secondary 31A15, 31B25}

\medskip
{\bf Key words: mappings  with a finite and bounded distortion,
moduli, capacity}

\section{Introduction}

Let us recall the formulation of the classical Koebe theorem, see,
for example,~\cite[Theorem~1.3]{CG}.

\medskip
{\bf Theorem A.} {\it Let $f:{\Bbb D}\rightarrow {\Bbb C}$ be an
univalent analytic function such that $f(0)=0$ and
$f^{\,\prime}(0)=1.$ Then the image of $f$ covers the open disk
centered at $0$ of radius one-quarter, that is, $f({\Bbb D})\supset
B(0, 1/4).$}

\medskip
The main fact contained in the paper is the statement that something
similar has been done for a much more general class of spatial
mappings. Below $dm(x)$ denotes the element of the Lebesgue measure
in ${\Bbb R}^n.$ Everywhere further the boundary $\partial A $ of
the set $A$ and the closure $\overline{A}$ should be understood in
the sense of the extended Euclidean space $\overline{{\Bbb R}^n}.$
Recall that, a Borel function $\rho:{\Bbb R}^n\,\rightarrow
[0,\infty] $ is called {\it admissible} for the family $\Gamma$ of
paths $\gamma$ in ${\Bbb R}^n,$ if the relation
\begin{equation}\label{eq1.4}
\int\limits_{\gamma}\rho (x)\, |dx|\geqslant 1
\end{equation}
holds for all (locally rectifiable) paths $ \gamma \in \Gamma.$ In
this case, we write: $\rho \in {\rm adm} \,\Gamma .$  The {\it
modulus} of $\Gamma $ is defined by the equality
\begin{equation}\label{eq1.3gl0}
M(\Gamma)=\inf\limits_{\rho \in \,{\rm adm}\,\Gamma}
\int\limits_{{\Bbb R}^n} \rho^n (x)\,dm(x)\,.
\end{equation}
Let $y_0\in {\Bbb R}^n,$ $0<r_1<r_2<\infty$ and
\begin{equation}\label{eq1**}
A=A(y_0, r_1,r_2)=\left\{ y\,\in\,{\Bbb R}^n:
r_1<|y-y_0|<r_2\right\}\,.\end{equation}
Given $x_0\in{\Bbb R}^n,$ we put
$$B(x_0, r)=\{x\in {\Bbb R}^n: |x-x_0|<r\}\,, \quad {\Bbb B}^n=B(0, 1)\,,$$
$$S(x_0,r) = \{
x\,\in\,{\Bbb R}^n : |x-x_0|=r\}\,. $$
A mapping $f: D \rightarrow{\Bbb R}^n$ is called {\it discrete} if
the pre-image $\{f^{-1}\left(y\right)\}$ of any point $y\,\in\,{\Bbb
R}^n$ consists of isolated points, and {\it open} if the image of
any open set $U\subset D$ is an open set in ${\Bbb R}^n.$

Given sets $E,$ $F\subset\overline{{\Bbb R}^n}$ and a domain
$D\subset {\Bbb R}^n$ we denote by $\Gamma(E,F,D)$ the family of all
paths $\gamma:[a,b]\rightarrow \overline{{\Bbb R}^n}$ such that
$\gamma(a)\in E,\gamma(b)\in\,F$ and $\gamma(t)\in D$ for $t \in (a,
b).$ Given a mapping $f:D\rightarrow {\Bbb R}^n,$ a point $y_0\in
\overline{f(D)}\setminus\{\infty\},$ and
$0<r_1<r_2<r_0=\sup\limits_{y\in f(D)}|y-y_0|,$ we denote by
$\Gamma_f(y_0, r_1, r_2)$ a family of all paths $\gamma$ in $D$ such
that $f(\gamma)\in \Gamma(S(y_0, r_1), S(y_0, r_2),
A(y_0,r_1,r_2)).$ Let $Q:{\Bbb R}^n\rightarrow [0, \infty]$ be a
Lebesgue measurable function. We say that {\it $f$ satisfies the
inverse Poletsky inequality at a point $y_0\in
\overline{f(D)}\setminus\{\infty\}$} if the relation
\begin{equation}\label{eq2*A}
M(\Gamma_f(y_0, r_1, r_2))\leqslant \int\limits_{A(y_0,r_1,r_2)\cap
f(D)} Q(y)\cdot \eta^{n}(|y-y_0|)\, dm(y)
\end{equation}
holds for any Lebesgue measurable function $\eta:
(r_1,r_2)\rightarrow [0,\infty ]$ such that
\begin{equation}\label{eqA2}
\int\limits_{r_1}^{r_2}\eta(r)\, dr\geqslant 1\,.
\end{equation}
The definition of the relation~(\ref{eq2*A}) at the point
$y_0=\infty$ may be given by the using of the inversion
$\psi(y)=\frac{y}{|y|^2}$ at the origin.

\medskip
Note that conformal mappings preserve the modulus of families of
paths, so that we may write
$$M(\Gamma)=M(f(\Gamma))\,.$$
It is not difficult to see from this that conformal mappings from
Koebe theorem satisfy the relation~(\ref{eq2*A}) with $Q\equiv 1$
for any function $\eta$ in~(\ref{eqA2}).

\begin{remark}\label{rem1}
It is known that the quasiregular mappings satisfy the inequality
$$M(\Gamma)\leqslant N(f, D)K_O(f)M(f(\Gamma))\,,$$
where $1\leqslant K_O(f)<\infty$ is some number, and $N(f, D)$
denotes the multiplicity function,
$$
N(y,f,E)\,=\,{\rm card}\,\left\{x\in E: f(x)=y\right\}\,,
$$
\begin{equation}\label{eq1G}
N(f,E)\,=\,\sup\limits_{y\in{\Bbb R}^n}\,N(y,f,E)\,,
\end{equation}
see~\cite[Theorem~3.2]{MRV$_1$}. There are also mappings in which
the distortion of the modulus of families of paths is much more
complex. Say, for homeomorphisms $f\in W^{1, n}_{\rm loc}$ such that
$f^{\,-1}\in W^{1, n}_{\rm loc}$ we have the inequality
\begin{equation}\label{eq16A}
M(\Gamma)\leqslant \int\limits_{f(D)}K_I(y, f^{\,-1})\cdot
\rho_*^n(y)\,dm(x)
\end{equation}
for any $\rho_* \in {\rm adm}\, f(\Gamma) $ (see below), where
\begin{equation}\label{eq15A}
K_{I}(y, f^{\,-1})\quad =\sum\limits_{x\in f^{\,-1}(y)}K_O(x, f)\,,
\end{equation}
$$K_{O}(x,f)\quad =\quad \left\{
\begin{array}{rr}
\frac{\Vert f^{\,\prime}(x)\Vert^n}{|J(x,f)|}, & J(x,f)\ne 0,\\
1,  &  f^{\,\prime}(x)=0, \\
\infty, & \text{otherwise}
\end{array}
\right.\,\,,$$
see \cite[Theorems~8.1, 8.6]{MRSY}.

All of the above allows us to assert that relation~(\ref{eq2*A}) is
satisfied by a fairly large number of mappings. In general, for
practically all currently known classes, including conformal and
quasiconformal mappings, quasiregular mappings, mappings with finite
distortion, etc. such inequalities are satisfied.
\end{remark}

\medskip
Set
\begin{equation}\label{eq12}
q_{y_0}(r)=\frac{1}{\omega_{n-1}r^{n-1}}\int\limits_{S(y_0,
r)}Q(y)\,d\mathcal{H}^{n-1}(y)\,, \end{equation}
and $\omega_{n-1}$ denotes the area of the unit sphere ${\Bbb
S}^{n-1}$ in ${\Bbb R}^n.$

\medskip
We say that a function ${\varphi}:D\rightarrow{\Bbb R}$ has a {\it
finite mean oscillation} at a point $x_0\in D,$ write $\varphi\in
FMO(x_0),$ if
$$\limsup\limits_{\varepsilon\rightarrow
0}\frac{1}{\Omega_n\varepsilon^n}\int\limits_{B( x_0,\,\varepsilon)}
|{\varphi}(x)-\overline{{\varphi}}_{\varepsilon}|\ dm(x)<\infty\,,
$$
where $\overline{{\varphi}}_{\varepsilon}=\frac{1}
{\Omega_n\varepsilon^n}\int\limits_{B(x_0,\,\varepsilon)}
{\varphi}(x) \,dm(x)$ and $\Omega_n$ is the volume of the unit ball
${\Bbb B}^n$ in ${\Bbb R}^n.$
We also say that a function ${\varphi}:D\rightarrow{\Bbb R}$ has a
finite mean oscillation at $A\subset \overline{D},$ write
${\varphi}\in FMO(A),$ if ${\varphi}$ has a finite mean oscillation
at any point $x_0\in A.$ Let $h$ be a chordal metric in
$\overline{{\Bbb R}^n},$
$$h(x,\infty)=\frac{1}{\sqrt{1+{|x|}^2}}\,,$$
\begin{equation}\label{eq3C}
h(x,y)=\frac{|x-y|}{\sqrt{1+{|x|}^2} \sqrt{1+{|y|}^2}}\qquad x\ne
\infty\ne y\,.
\end{equation}
and let $h(E):=\sup\limits_{x,y\in E}\,h(x,y)$ be a chordal diameter
of a set~$E\subset \overline{{\Bbb R}^n}$ (see, e.g.,
\cite[Definition~12.1]{Va}).

\medskip
Given a continuum $E\subset D,$ $\delta>0$ and a Lebesgue measurable
function $Q:{\Bbb R}^n\rightarrow [0, \infty]$ we denote by
$\frak{F}_{E, \delta}(D)$ the family of all mapping $f:D\rightarrow
{\Bbb R}^n,$ $n\geqslant 2,$ satisfying
relations~(\ref{eq2*A})--(\ref{eqA2}) at any point $y_0\in
\overline{{\Bbb R}^n}$ such that $h(f(E))\geqslant \delta.$ The
following statement holds.

\medskip
\begin{theorem}\label{th1}
{\it Let $D$ be a domain in ${\Bbb R}^n,$ $n\geqslant 2,$ and let
$B(x_0, \varepsilon_1)\subset D$ for some $\varepsilon_1>0.$

\medskip
Assume that, $Q\in L^1({\Bbb R}^n)$ and, in addition, one of the
following conditions hold:

\medskip
1) $Q\in FMO(\overline{{\Bbb R}^n});$

\medskip
2) for any $y_0\in \overline{{\Bbb R}^n}$ there is $\delta(y_0)>0$
such that
\begin{equation}\label{eq5D}
\int\limits_{0}^{\delta(y_0)}
\frac{dt}{tq_{y_0}^{\frac{1}{n-1}}(t)}=\infty\,.
\end{equation}
Then there is $r_0>0,$ which does not depend on $f,$ such that
$$f(B(x_0, \varepsilon_1))\supset B(f(x_0), r_0)\qquad \forall\,\,f\in \frak{F}_{E,
\delta}(D)\,.$$}
\end{theorem}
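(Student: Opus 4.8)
The plan is to argue by contradiction and to reduce the covering statement to a uniform lower bound on the distance from $f(x_0)$ to the boundary of the image of the ball. Concretely, I would first observe that it suffices to produce an $r_0>0$, independent of $f$, such that no point $z_0\in\partial f(B(x_0,\varepsilon_1))$ satisfies $|z_0-f(x_0)|<r_0$: indeed, if every boundary point of $G:=f(B(x_0,\varepsilon_1))$ is at distance at least $r_0$ from $f(x_0)$, then the connected set $B(f(x_0),r_0)$ meets $G$ (at $f(x_0)$) and misses $\partial G$, whence $f(x_0)\in\mathrm{int}\,G$ and $B(f(x_0),r_0)\subset G$. This step is purely topological and needs no regularity of $f$ beyond continuity.

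For the contradiction I would fix such an omitted boundary value $z_0$ with $|z_0-f(x_0)|$ small and estimate the modulus of $\Gamma_f(z_0,r_1,r_2)$ from above and below for suitable radii. Since $z_0\in\overline{f(D)}$ and $h(f(E))\geqslant\delta$, the inequality~(\ref{eq2*A}) applies at $z_0$. Choosing the standard extremal function $\eta(t)=1/\big(t\,q_{z_0}^{1/(n-1)}(t)\,I(r_1,r_2)\big)$ with $I(r_1,r_2)=\int_{r_1}^{r_2}dt/(t\,q_{z_0}^{1/(n-1)}(t))$ in~(\ref{eq2*A}), and integrating in spherical coordinates against $q_{z_0}$ from~(\ref{eq12}), gives
\[
M(\Gamma_f(z_0,r_1,r_2))\leqslant \frac{\omega_{n-1}}{I^{\,n-1}(r_1,r_2)}\,.
\]
Under the divergence hypothesis~(\ref{eq5D}) (with $r_2\leqslant\delta(z_0)$ fixed) one has $I(r_1,r_2)\to\infty$ as $r_1\to0$, so this upper bound tends to $0$; in the $FMO$ case the analogous choice of $\eta$ together with the standard $FMO$ modulus estimate yields the same decay. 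The integrability $Q\in L^1(\Bbb R^n)$ together with the compactness of $\overline{\Bbb R^n}$ are what I would use to make this decay uniform in the moving center $z_0$, and hence in $f$.

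The opposite, lower estimate is where the geometry of the class enters. Because $h(f(E))\geqslant\delta$, the continuum $f(E)$ has chordal diameter bounded below, so it contains a point at a fixed distance $\geqslant r_2=r_2(\delta)$ from $z_0$; thus $f(E)$ reaches outside $B(z_0,r_2)$. On the other side, using the equicontinuity (normality) of the family $\frak{F}_{E,\delta}(D)$---a property available for mappings obeying~(\ref{eq2*A}) under the present hypotheses---I would thicken $x_0$ to a ball $\overline{B(x_0,\rho)}$ whose image lies in $B(z_0,r_1)$. The family $\Gamma(\overline{B(x_0,\rho)},E,D)$ of paths joining these two fixed continua in $D$ has modulus bounded below by a positive constant $c_0$, and every such path has an image running from $B(z_0,r_1)$ out past $S(z_0,r_2)$, hence a subpath crossing the annulus $A(z_0,r_1,r_2)$; by the minorization principle this forces $M(\Gamma_f(z_0,r_1,r_2))\geqslant c_0>0$. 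Comparing with the upper bound yields the desired contradiction once $r_1$ is small enough.

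The main obstacle I anticipate is precisely this lower bound, and in two respects. First, the thickening radius $\rho$ and the inner radius $r_1$ are coupled through the modulus of continuity, so the constant $c_0$ must be shown not to degenerate as $r_1\to0$ faster than the upper bound does; controlling this balance (and the case where $f(E)$ dips back inside $B(z_0,r_2)$, which I would handle by passing to the subcontinuum of $E$ whose image stays outside $B(z_0,r_2)$, nondegenerate because a continuum has no isolated points) is the delicate part. Second, the fact that $z_0$ is an omitted value of $f$ on $B(x_0,\varepsilon_1)$ must be genuinely used---through the connectedness of the image and the boundary behaviour of the maps in the class---to guarantee that enough domain paths have images crossing the small annulus near $z_0$; this is the crux on which the uniformity of $r_0$ ultimately rests.
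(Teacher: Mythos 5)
Your skeleton matches the paper's: argue by contradiction, bound $M(\Gamma_f(z_0,r_1,r_2))$ from above via~(\ref{eq2*A}) with the test function $\eta=\psi/I$ (this is what Proposition~\ref{pr6} and the computation~(\ref{eq3J}) provide), and from below by the modulus of a family of paths joining two continua in $D$, transferred into the annulus by minorization (this is Lemma~\ref{lem2} together with~(\ref{eq3H})). But there is a genuine gap, sitting exactly at the point you yourself flag as ``the delicate part,'' and as stated it is fatal. Your lower-bound plate is $\overline{B(x_0,\rho)}$ with $\rho$ chosen by equicontinuity so that $f(\overline{B(x_0,\rho)})\subset B(z_0,r_1)$; hence $\rho=\rho(r_1)\to 0$ as $r_1\to 0$, and then $c_0(\rho)=M(\Gamma(\overline{B(x_0,\rho)},E^{\prime},D))$ degenerates as well, since every such path crosses the annulus $A(x_0,\rho,d)$, $d={\rm dist}\,(x_0,E^{\prime})$, so that $c_0(\rho)\leqslant \omega_{n-1}\left(\log\frac{d}{\rho}\right)^{1-n}\to 0$. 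Quantitatively, the only available modulus of continuity (Proposition~\ref{pr1}) is logarithmic, which forces $\rho(r_1)\approx \exp(-c/r_1^{n})$ and hence $c_0\lesssim r_1^{n(n-1)}$, a power of $r_1$; while under hypothesis~(\ref{eq5D}) the upper bound $\omega_{n-1}/I^{n-1}(r_1,r_2)$ may tend to zero arbitrarily slowly (an iterated-logarithm rate, say). Then the upper bound dominates the lower bound for all small $r_1$ and no contradiction arises: the pointwise hypotheses 1)--2) contain nothing that couples the two decay rates in your favor.

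The missing idea --- the heart of the paper's proof --- is path lifting, and it is also precisely where the omittedness of the value gets used (your second flagged obstacle). The paper joins $f_m(x_0)$ to a nearby omitted value $y_m$ by a segment $\beta_m$ and takes a maximal $f_m$-lifting $\alpha_m$ of $\beta_m$ starting at $x_0$ (Proposition~\ref{pr3}; this is where openness and discreteness of the mappings enter). Because $y_m$ is omitted on $B(x_0,\varepsilon_1)$, the lifting cannot terminate inside the ball, so $\alpha_m(t)\rightarrow S(x_0,\varepsilon_1)$; one obtains a continuum $\overline{|\alpha_m|}$ whose chordal diameter is bounded below by a constant depending only on $x_0$ and $\varepsilon_1$, yet whose image lies in the arbitrarily small ball $\overline{B(f_m(x_0),1/m)}$. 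Large in the domain, small in the image: this single object breaks the coupling that kills your argument, and Lemma~\ref{lem2} then yields a lower bound $R_0$ independent of $m$, against an upper bound tending to $0$. A secondary structural correction: the uniformity in the moving center cannot be obtained, as you propose, by making the decay of the upper bound ``uniform in $z_0$'' out of $Q\in L^1({\Bbb R}^n)$ and compactness --- the FMO/divergence hypotheses are pointwise and give no uniform rate. The paper instead negates the conclusion, extracts sequences $f_m$ and $y_m\rightarrow y_0$ with $f_m(x_0)\rightarrow y_0$, and centers all annuli at the single limit point $y_0$ (legitimate because $|\beta_{m_k}|\subset B(y_0,2^{-k})$), so that the decay of the modulus bound is only ever needed at one fixed point.
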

\begin{remark}
The condition $Q\in FMO(\infty)$ of the condition~(\ref{eq5D}) for
$y_0=\infty$ must be understood as follows: these conditions hold
for $y_0=\infty$ if and only if the function
$\widetilde{Q}:=Q\left(\frac{y}{|y|^2}\right)$ satisfies similar
conditions at the origin.
\end{remark}

\medskip
Note that the above analogue of Koebe's theorem has an important
application in the field of convergence of mappings. Recall that, a
mapping $f:D\rightarrow {\Bbb R}^n$ is called a {\it
$K$-quasiregular mapping,} if the following conditions hold:

\medskip
\label{page1} 1) $f\in W_{loc}^{1,n}(D),$

2) the Jacobian $J(x,f)$ of $f$ at $x\in D$ preserves the sign
almost everywhere in $D,$

3) $\Vert f^{\,\prime}(x) \Vert^n \leqslant K \cdot |J(x,f)|$ for
almost any $x\in D$ and some constant $K<\infty,$ where
$\Vert f^{\,\prime}(x)\Vert\,=\,\max\limits_{h\in {\Bbb R}^n
\backslash \{0\}} \frac {|f^{\,\prime}(x)h|}{|h|}\,,\quad
J(x,f)=\det f^{\,\prime}(x),$
see e.g. \cite[Section~4, Ch.~I]{Re}, cf.~\cite[Definition~2.1,
Ch.~I]{Ri}. As is known, the class of mappings with bounded
distortion is closed under locally uniform convergence. In
particular, the following statement is true (see, for example,
\cite[Theorem~9.2.II]{Re}).

\medskip
{\bf Theorem B.} Let $f_j:D\rightarrow {\Bbb R}^n,$ $n\geqslant 2,$
$j=1,2,\ldots,$ be a sequence of $K$-quasiregular mappings
converging to some mapping $f:D\rightarrow {\Bbb R}^n$ as
$j\rightarrow\infty$ locally uniformly in $D.$ Then either $f$ is
$K$-quasiregular, of $f$ is a constant. In particular, in the first
case $f$ is discrete and open (see \cite[Theorems~6.3.II
and~6.4.II]{Re}).

\medskip
As for the classes we are studying in~(\ref{eq2*A})--(\ref{eqA2}),
the following analogue of Theorem B is valid for them.

\medskip
\begin{theorem}\label{th2}
{\it\, Let $D$ be a domain in ${\Bbb R}^n,$ $n\geqslant 2.$ Let
$f_j:D\rightarrow {\Bbb R}^n,$ $n\geqslant 2,$ $j=1,2,\ldots,$ be a
sequence of open discrete mappings satisfying the
conditions~(\ref{eq2*A})--(\ref{eqA2}) at any point $y_0\in
\overline{{\Bbb R}^n}$ and converging to some mapping
$f:D\rightarrow {\Bbb R}^n$ as $j\rightarrow\infty$ locally
uniformly in $D.$ Assume that the conditions on the function $Q$
from Theorem~\ref{th1} hold. Then either $f$ is a constant, or $f$
is light and open.}
\end{theorem}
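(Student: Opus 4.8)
\emph{Reduction and openness.} If $f$ is constant the first alternative holds, so I will assume $f$ is non-constant and prove that it is then light and open, treating the two properties separately. Openness I would derive from the uniform covering estimate of Theorem~\ref{th1}. Since $f$ is non-constant, I can pick $a,b\in D$ with $f(a)\neq f(b)$ and join them by a path to fix once and for all a continuum $E\subset D$ with $h(f(E))\geqslant h(f(a),f(b))>0$; set $\delta:=\tfrac12 h(f(E))$. Because $f_j\to f$ uniformly on the compact set $E$, one has $h(f_j(E))\to h(f(E))$, so $h(f_j(E))\geqslant\delta$ for all large $j$, and since each $f_j$ already satisfies \eqref{eq2*A}--\eqref{eqA2} at every point this puts $f_j\in\frak{F}_{E,\delta}(D)$ for all large $j$. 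Fixing any $x_1\in D$ and $\varepsilon_1>0$ with $\overline{B(x_1,\varepsilon_1)}\subset D$, Theorem~\ref{th1} supplies $r_0>0$, independent of $j$, with $f_j(B(x_1,\varepsilon_1))\supset B(f_j(x_1),r_0)$ for all large $j$. I would then pass to the limit: for $y\in B(f(x_1),r_0/2)$ the convergence $f_j(x_1)\to f(x_1)$ gives $y\in B(f_j(x_1),r_0)$ for large $j$, hence points $x_j\in B(x_1,\varepsilon_1)$ with $f_j(x_j)=y$; along a convergent subsequence $x_{j_k}\to x^{\ast}\in\overline{B(x_1,\varepsilon_1)}$, and uniform convergence together with continuity of $f$ forces $f(x^{\ast})=y$. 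Thus $B(f(x_1),r_0/2)\subset f(\overline{B(x_1,\varepsilon_1)})$, and applying this at every point of a given open $U\subset D$ shows $f(U)$ is a neighbourhood of each of its points, i.e. $f$ is open.

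\emph{Lightness.} Here I would argue by contradiction: suppose some fiber is not totally disconnected, so there is a nondegenerate continuum $K\subset D$ with $f(K)=\{y_0\}$, and I will contradict the inverse Poletsky inequality at $y_0$. The first task is to transfer \eqref{eq2*A}--\eqref{eqA2} from the $f_j$ to $f$ at $y_0$: the right-hand side of \eqref{eq2*A} does not involve the map, so it is enough to bound $M(\Gamma_f(y_0,r_1,r_2))$. Fixing $0<r_1<r_2$ and small $\varepsilon>0$, I note that if a path $\gamma$ has $f$-image running from $S(y_0,r_1)$ to $S(y_0,r_2)$ then, for all large $j$, its $f_j$-image runs from near $S(y_0,r_1)$ to near $S(y_0,r_2)$ and hence contains a subpath whose $f_j$-image runs from $S(y_0,r_1+\varepsilon)$ to $S(y_0,r_2-\varepsilon)$; thus $\Gamma_f(y_0,r_1,r_2)$ is minorized by $\Gamma_{f_j}(y_0,r_1+\varepsilon,r_2-\varepsilon)$, so that
$$M(\Gamma_f(y_0,r_1,r_2))\leqslant M(\Gamma_{f_j}(y_0,r_1+\varepsilon,r_2-\varepsilon))\leqslant\int\limits_{A(y_0,r_1,r_2)}Q(y)\,\eta^{\,n}(|y-y_0|)\,dm(y)$$
for every admissible $\eta$ (after the harmless renormalization of $\eta$ on the slightly shorter interval), and letting $\varepsilon\to0$ gives \eqref{eq2*A} for $f$ at $y_0$. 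Next I would take $r_2$ so small that the component $U$ of $f^{-1}(B(y_0,r_2))$ containing $K$ has $\overline{U}\subset D$ (possible since $K$ is compact in $D$ and $f(K)=\{y_0\}$), whence $\partial U\subset f^{-1}(S(y_0,r_2))$. Every path joining $K$ to $\partial U$ inside $U$ has $f$-image going from $y_0$ out to $S(y_0,r_2)$ and therefore contains, for each $r_1\in(0,r_2)$, a subpath lying in $\Gamma_f(y_0,r_1,r_2)$; hence
$$M(\Gamma_f(y_0,r_1,r_2))\geqslant M(\Gamma(K,\partial U,U))=:m_0>0,$$
with $m_0$ independent of $r_1$ because $K$ is a nondegenerate continuum. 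Finally, since $Q\in L^1$ and $Q$ satisfies condition~1) or~2) of Theorem~\ref{th1}, the functions $\eta$ in \eqref{eqA2} can be chosen so that the right-hand side of \eqref{eq2*A} tends to $0$ as $r_1\to0$ — the standard consequence of $Q\in FMO(y_0)$, respectively of the divergence in \eqref{eq5D}. Passing to the limit $r_1\to0$ would then contradict $m_0>0$, proving that $f$ is light.

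\emph{Main obstacle.} The step that will require the most care is the transfer of the inverse Poletsky inequality from the $f_j$ to the limit $f$. The difficulty is not the bookkeeping of radii but the \emph{uniformity} of the minorization: locally uniform convergence controls $f_j-f$ only on compact subsets of $D$, whereas the paths in $\Gamma_f(y_0,r_1,r_2)$ may approach $\partial D$, so the extraction of the subpaths in $\Gamma_{f_j}(y_0,r_1+\varepsilon,r_2-\varepsilon)$ must be organized through an exhaustion of $D$ by compacta together with a lower-semicontinuity argument for the modulus as $\varepsilon\to0$. Once this passage is secured, the covering estimate of Theorem~\ref{th1}, the elementary compactness argument for openness, and the $FMO$/divergence estimate forcing the right-hand side of \eqref{eq2*A} to vanish assemble into the full statement.
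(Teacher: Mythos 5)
Your openness argument is correct and is essentially the paper's own proof: fix $E=|\gamma|$ from a path joining $a,b$ with $f(a)\ne f(b)$, observe $f_j\in\frak{F}_{E,\delta}(D)$ for large $j$, invoke Theorem~\ref{th1} for a radius $r_0$ independent of $j$, and pass to the limit by compactness of $\overline{B(x_1,\varepsilon_1)}$. No issues there.

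The lightness half is where you part ways with the paper, and where the gaps are. The paper does not reprove lightness at all: it quotes the earlier result of~\cite{Sev$_1$}. Your substitute argument hinges on transferring~(\ref{eq2*A})--(\ref{eqA2}) from the $f_j$ to the limit $f$, and that step, as you write it, fails: the minorization of $\Gamma_f(y_0,r_1,r_2)$ by $\Gamma_{f_j}(y_0,r_1+\varepsilon,r_2-\varepsilon)$ requires one fixed $j$ that works for \emph{every} path of the family, whereas locally uniform convergence only yields an index $j(\gamma)$ depending on the compact image of each individual path; since these compacta exhaust $D$, no single $j$ serves the whole family. You flag this as the ``main obstacle,'' but the proposed repair (exhaustion plus ``lower semicontinuity'') is not carried out; making it rigorous needs the continuity of the modulus under increasing unions of path families (a nontrivial theorem of Ziemer--Hesse type, nowhere available in this paper) together with care in renormalizing $\eta$ on the shrunken interval, since an admissible $\eta$ may concentrate its mass near the endpoints $r_1,r_2$. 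There is a second, unacknowledged gap: the existence of $r_2$ such that the component $U$ of $f^{-1}(B(y_0,r_2))$ containing $K$ satisfies $\overline{U}\subset D$ does not follow from compactness of $K$. If the component of the fiber $f^{-1}(y_0)$ containing $K$ approaches $\partial D$ (which nothing excludes at this stage), then for every $r_2>0$ the set $U$ reaches the boundary of any fixed compact neighborhood, and no such $r_2$ exists.

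Both gaps disappear if you apply the inverse Poletsky inequality to the mappings $f_j$ themselves rather than to $f$ --- which is precisely the mechanism of the paper's Lemma~\ref{lem1} and of~\cite{Sev$_1$}. Keep the nondegenerate continuum $K$ with $f(K)=\{y_0\}$, and choose a second continuum $E\subset D$ with $y_0\notin f(E)$ (a small closed ball around a point $b$ with $f(b)\ne y_0$). Uniform convergence on the two \emph{fixed} compacta $K$ and $E$ gives, for all large $j$, $f_j(K)\subset B(y_0,\varepsilon_j)$ with $\varepsilon_j\rightarrow 0$ and $f_j(E)\cap B(y_0,\varepsilon_2)=\varnothing$ for some fixed $\varepsilon_2>0$. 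Then every path of $\Gamma(K,E,D)$ contains a subpath whose $f_j$-image crosses $A(y_0,\varepsilon_j,\varepsilon_2)$, so $\Gamma(K,E,D)$ is minorized by $\Gamma_{f_j}(y_0,\varepsilon_j,\varepsilon_2)$ for each fixed large $j$; by~(\ref{eq2*A}) and Proposition~\ref{pr6} the modulus of the latter is $o(1)$ as $j\rightarrow\infty$, contradicting the positive lower bound for $M(\Gamma(K,E,D))$ supplied by Lemma~\ref{lem2}. This avoids both the transfer of the inequality to $f$ and the normal-neighborhood construction.
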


\medskip
\begin{remark}\label{rem2}
The lightness of the mapping $f$ in Theorem~\ref{th2} was
established earlier, see~\cite{Sev$_1$}, cf.~\cite{Cr}. The goal of
the paper is to obtain the openness of this mapping, which will
follow from Theorem~\ref{th1}. Note that mappings that satisfy
conditions~(\ref{eq2*A})--(\ref{eqA2}) may not be open. For example,
let $x=(x_1,\ldots, x_n).$ We define $f$ as the identical mapping in
the closed domain $\{x_n\geqslant 0\}$ and set
$f(x)=(x_1,\ldots,-x_n)$ for $x_n<0.$ Observe that, the mapping $f$
satisfies conditions~(\ref{eq2*A})--(\ref{eqA2}) for $Q(y)\equiv 2.$
Indeed, $f$ preserves the lengths of paths, is differentiable almost
everywhere and has Luzin’s $N$ and $N^{\,-1}$-properties. Therefore,
$f$ is a mapping with a finite length distortion (for the definition
see~\cite[section~8]{MRSY}). Now, $f$ satisfies~(\ref{eq16A}) with
$Q:=K_{I}(y, f^{\,-1})\quad =\sum\limits_{x\in f^{\,-1}(y)}K_O(x,
f)\leqslant 1+1=2$ by \cite[Theorems~8.1, 8.6]{MRSY}. Therefore $f$
satisfies conditions~(\ref{eq2*A})--(\ref{eqA2}) for $Q(y)\equiv 2,$
as well.

As for the discreteness of the limit mapping $f$ in
Theorem~\ref{th2}, whether this mapping will be such is currently
unknown.
\end{remark}

\section{Preliminaries}

The following statement was proved in~\cite[Lemma~2.1]{Na}.

\medskip
\begin{proposition}\label{pr2}
{\it\, The number $\delta_n(r)=\inf M(\Gamma(F, F_*, \overline{{\Bbb
R}^n})),$ where the infimum is taken over all continua $F$ and $F_*$
in $\overline{{\Bbb R}^n}$ with $h(F)\geqslant r$ and
$h(F_*)\geqslant r,$ is positive for each $r>0$ and zero for $r=0.$}
\end{proposition}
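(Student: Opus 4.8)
The statement has two parts: $\delta_n(0)=0$ and $\delta_n(r)>0$ for every $r>0$. The easy direction is $r=0$: here the constraints $h(F)\geqslant 0$ and $h(F_*)\geqslant 0$ are vacuous, so we may take $F$ and $F_*$ to be two distinct one-point sets $\{a\},\{b\}$. Every path in $\Gamma(\{a\},\{b\},\overline{{\Bbb R}^n})$ passes through the fixed point $a$, and the family of all (nonconstant) paths through a fixed point has modulus zero for $n\geqslant 2$ (the standard logarithmic admissible weight $\rho(x)=c/(|x-a|\log(e/|x-a|))$ near $a$ has finite, arbitrarily small $\int\rho^n$, while $\int_\gamma\rho=\infty$). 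Hence $M(\Gamma(\{a\},\{b\},\overline{{\Bbb R}^n}))=0$, which already forces $\delta_n(0)=0$.

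For $r>0$ the plan is to produce a positive lower bound independent of the particular pair $(F,F_*)$. First I would pass to the sphere: the stereographic projection $\pi$ realizes $(\overline{{\Bbb R}^n},h)$ as a round sphere ${\Bbb S}^n\subset{\Bbb R}^{n+1}$ on which $h$ coincides with the ambient Euclidean distance, and $\pi$ is conformal. Since the $n$-modulus is a conformal invariant, $M(\Gamma(F,F_*,\overline{{\Bbb R}^n}))=M(\Gamma(\pi(F),\pi(F_*),{\Bbb S}^n))$, while the hypotheses $h(F)\geqslant r$ and $h(F_*)\geqslant r$ become the diameter bounds $\operatorname{diam}\pi(F)\geqslant r$ and $\operatorname{diam}\pi(F_*)\geqslant r$ on ${\Bbb S}^n$.

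The key step is the lower bound on ${\Bbb S}^n$. The sphere is an Ahlfors $n$-regular Loewner space, so there is a decreasing function $\varphi_n:(0,\infty)\to(0,\infty)$, depending only on $n$, with $M(\Gamma(E,E_*,{\Bbb S}^n))\geqslant\varphi_n\!\left(\frac{\operatorname{dist}(E,E_*)}{\min\{\operatorname{diam}E,\operatorname{diam}E_*\}}\right)$ for all disjoint continua $E,E_*\subset{\Bbb S}^n$. Applying this with $E=\pi(F)$, $E_*=\pi(F_*)$ and using that the whole space has chordal diameter $1$, so that $\operatorname{dist}(E,E_*)\leqslant 1$ while $\min\{\operatorname{diam}E,\operatorname{diam}E_*\}\geqslant r$, the relative distance is at most $1/r$; monotonicity of $\varphi_n$ then yields $M\geqslant\varphi_n(1/r)>0$ uniformly, whence $\delta_n(r)\geqslant\varphi_n(1/r)>0$. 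If $F$ and $F_*$ happen to meet, the modulus is only larger, so that case is harmless.

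The main obstacle is precisely this uniform lower bound on the sphere, i.e. the Loewner estimate. If one prefers to avoid invoking Loewner theory, the same conclusion can be reached by a compactness argument: assuming $\delta_n(r)=0$, choose pairs $(F_k,F_{*,k})$ on ${\Bbb S}^n$ with diameters $\geqslant r$ and $M\to 0$, extract Hausdorff-convergent subsequences $F_k\to F_0$, $F_{*,k}\to F_{*,0}$ (the space of continua in the compact ${\Bbb S}^n$ is itself compact), observe that the limit continua still have diameter $\geqslant r$ and are therefore non-degenerate, and derive a contradiction with the positivity of the modulus between two non-degenerate continua. The delicate point in this alternative is the semicontinuity of the modulus under Hausdorff convergence of the bounding continua; the Loewner route is cleaner because it makes the bound explicit and uniform from the start.
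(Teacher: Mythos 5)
Your proposal is correct in substance, but there is nothing in the paper to compare it against: the authors do not prove Proposition 2.1, they quote it verbatim from N\"akki's paper (Lemma 2.1 of ``Extension of Loewner's capacity theorem''), so the ``paper's proof'' is a citation. On its own merits, your argument works. The $r=0$ half is the standard one: two distinct singletons are admissible competitors, and the family of nonconstant paths through a fixed point has zero $n$-modulus, so $\delta_n(0)=0$. The $r>0$ half --- stereographic projection, conformal invariance of the $n$-modulus, then the Loewner-type lower bound on ${\Bbb S}^n$ in terms of relative distance, with ${\rm dist}(E,E_*)\leqslant 1$ and both diameters $\geqslant r$ --- is also sound, and your disposal of the intersecting case is right (once constant paths belong to the family there are no admissible functions, so the modulus is $+\infty$). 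The one caveat you should state openly is where the weight of the proof sits: the Loewner property of ${\Bbb S}^n$ is not a light auxiliary tool here, it \emph{is} essentially the statement being proved. N\"akki's lemma descends from Loewner's capacity theorem, and ``continua of chordal diameter $\geqslant r$ are joined by a curve family of modulus $\geqslant c(n,r)>0$'' is precisely what the Loewner function encodes; the classical proofs (Loewner, Gehring, V\"ais\"al\"a's Lectures, the capacity estimates of Section 11) obtain it by symmetrization-type arguments, and the metric-space formulation you invoke (Heinonen--Koskela; Heinonen's \emph{Lectures on Analysis on Metric Spaces}, Ch.~8) is a later repackaging of the same fact. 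So your proof is correct relative to that black box and you should cite it precisely, but it is not more elementary than the result it replaces. Your fallback compactness sketch has exactly the gap you flag yourself --- the modulus is not lower semicontinuous under Hausdorff convergence of the end continua without additional work --- so the Loewner route is the one to keep.
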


\medskip The following statement also may be found in~\cite[Theorem~4.1]{Na}).

\medskip
\begin{proposition}\label{pr5}
{\it\, Let $\frak{F}$ be a collection of connected sets in a domain
$D$ and let $\inf h(F)> 0,$ $F\in \frak{F}.$ Then $\inf\limits_{F\in
\frak{F}}M(\Gamma(F, A, D))>0$ either for each or for no continuum
$A$ in~$D.$}
\end{proposition}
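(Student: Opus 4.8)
The plan is to reduce the statement to a comparison of two fixed continua and to settle it by a single modulus estimate. The assertion is equivalent to saying that for arbitrary continua $A_1,A_2\subset D$ one has $\inf_{F\in\frak{F}}M(\Gamma(F,A_1,D))>0$ iff $\inf_{F\in\frak{F}}M(\Gamma(F,A_2,D))>0$; since the pair is arbitrary, it suffices to prove the one implication in contrapositive form, namely that $\inf_{F}M(\Gamma(F,A_1,D))=0$ forces $\inf_{F}M(\Gamma(F,A_2,D))=0$. So I fix a sequence $F_k\in\frak{F}$ with $M(\Gamma(F_k,A_1,D))\to 0$, choose $\rho_k\in{\rm adm}\,\Gamma(F_k,A_1,D)$ with $\int_D\rho_k^n\,dm\to 0$, and aim to show $M(\Gamma(F_k,A_2,D))\to 0$; as $\{F_k\}\subset\frak{F}$, this already yields $\inf_F M(\Gamma(F,A_2,D))=0$.

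Since $D$ is a domain, hence arcwise connected, I first fix a rectifiable arc $P\subset D$ joining $A_1$ to $A_2$ and put $C:=A_1\cup P\cup A_2$, a continuum with $\overline C\subset D$. The idea is to upgrade each $\rho_k$ into an admissible function for $\Gamma(F_k,A_2,D)$ at the cost of a factor tending to $1$. I split $\Gamma(F_k,A_2,D)$ into the subfamily of paths meeting $A_1$ and the subfamily of paths avoiding $A_1$. Any path of the first type has a subpath joining $F_k$ to $A_1$ in $D$, so $\int_\gamma\rho_k\,|dx|\ge 1$ and $\rho_k$ is already admissible on it. For a path $\gamma$ of the second type, terminating at a point $q\in A_2$, I would concatenate $\gamma$ with a connecting curve $\beta_{k,q}\subset D$ running from $q$ through a neighborhood of $A_2\cup P$ down to $A_1$; the concatenation belongs to $\Gamma(F_k,A_1,D)$, whence $\int_\gamma\rho_k\,|dx|\ge 1-\int_{\beta_{k,q}}\rho_k\,|dx|$. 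If the correction $\varepsilon_k:=\sup_{q\in A_2}\int_{\beta_{k,q}}\rho_k\,|dx|$ tends to $0$, then $(1-\varepsilon_k)^{-1}\rho_k\in{\rm adm}\,\Gamma(F_k,A_2,D)$, and therefore
$$M(\Gamma(F_k,A_2,D))\le (1-\varepsilon_k)^{-n}\int_D\rho_k^n\,dm\longrightarrow 0,$$
which is exactly the conclusion sought.

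The hard part is precisely the uniform smallness of $\varepsilon_k$: the terminal point $q$ sweeps the whole of $A_2$, so no single fixed connector will do, and the line integral of a near-extremal $\rho_k$ along one prescribed curve need not be controlled by the vanishing area integral $\int_D\rho_k^n\,dm$. I would defeat this by a Fubini/coarea averaging. Fix a thin tube $T=N_r(C)\Subset D$ and a Lipschitz family of connecting curves $\{\beta_{k,q}\}_{q\in A_2}$ filling $T$ with bounded overlap (for instance a flow of the distance function to $A_1$ inside $T$); integrating over the terminal parameter and applying Hölder's inequality gives a bound of the shape $\int\big(\int_{\beta_{k,q}}\rho_k\,ds\big)\,d\mu(q)\lesssim\int_T\rho_k\,dm\le|T|^{\,1-1/n}\big(\int_T\rho_k^n\,dm\big)^{1/n}\to 0$, so good connectors exist for $\mu$-almost every $q$. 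To turn ``almost every'' into ``every'' I would regularize $\rho_k$ (replacing it by a slightly larger lower semicontinuous majorant with the same limiting $L^n$-mass) and add a correction supported in $T$ whose $L^n$-norm also tends to $0$; the boundary-layer estimate for the tube then forces $\int_{\beta_{k,q}}\rho_k\,|dx|\le\varepsilon_k\to 0$ uniformly in $q$.

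Finally, the degenerate configurations need only a word. If $A_1\cap A_2\neq\varnothing$ the arc $P$ may be taken trivial and the avoiding family is handled over the shorter connector, while the positivity of the relevant connecting moduli, used merely to ensure the construction is nonvacuous, is guaranteed by Proposition~\ref{pr2} since every $F\in\frak{F}$ and every continuum involved has chordal diameter bounded below. Putting the two subfamilies together yields $M(\Gamma(F_k,A_2,D))\to 0$, which proves the contrapositive and hence the stated dichotomy: the positivity of $\inf_{F\in\frak{F}}M(\Gamma(F,A,D))$ holds simultaneously for every continuum $A\subset D$ or for none.
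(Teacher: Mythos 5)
The paper gives no proof of this proposition at all (it is quoted from \cite[Theorem~4.1]{Na}), so your argument must stand on its own, and it breaks at exactly the step you yourself call ``the hard part''. The reduction, the splitting of $\Gamma(F_k,A_2,D)$ into paths meeting $A_1$ and paths avoiding it, and the concatenation inequality $\int_\gamma\rho_k\,|dx|\ge 1-\int_{\beta_{k,q}}\rho_k\,|dx|$ are all fine; what is unobtainable is $\varepsilon_k=\sup_{q\in A_2}\int_{\beta_{k,q}}\rho_k\,|dx|\to 0$, i.e.\ cheap connectors starting at \emph{every} point $q\in A_2$. Line integrals along individual curves are simply not controlled by $L^n$-norms: fix $q_0\in A_2$ and put $\sigma_\delta(y)=\bigl(|y-q_0|\log(e/|y-q_0|)\bigr)^{-1}$ for $0<|y-q_0|<\delta$, $\sigma_\delta=0$ otherwise; then $\int_{{\Bbb R}^n}\sigma_\delta^n\,dm\to 0$ as $\delta\to 0$, yet \emph{every} rectifiable curve reaching $q_0$ has infinite $\sigma_\delta$-length. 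Replacing your $\rho_k$ by $\rho_k+\sigma_{\delta_k}$ preserves admissibility for $\Gamma(F_k,A_1,D)$ and the convergence $\int_D\rho_k^n\,dm\to 0$, but makes $\varepsilon_k=\infty$, so the final display $(1-\varepsilon_k)^{-n}\int_D\rho_k^n\,dm$ is vacuous. Nothing in your construction excludes such $\rho_k$; your Fubini/coarea averaging (which is sound) yields cheap connectors only for almost every $q$, and the proposed upgrade to all $q$ cannot work: passing to a lower semicontinuous majorant or adding a correction only \emph{increases} line integrals, while the exceptional set of terminal points, however small, leaves the subfamily of paths of $\Gamma(F_k,A_2,D)$ ending there entirely unestimated.

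There is also a structural reason why no argument of this shape can succeed: you never use the hypothesis $\inf h(F)>0$ in any essential way (Proposition~\ref{pr2} appears only to make the construction ``nonvacuous''), but the proposition is false without that hypothesis. Take $D={\Bbb B}^n$, $A_1$ the segment $[-e_1/2,\,e_1/2]$, $A_2$ the segment $[3e_1/4,\,7e_1/8]$, and $F_k=S(0,t_k)$ with $t_k\to 0$: then $M(\Gamma(F_k,A_1,D))=\infty$ for all $k$ (the two sets intersect), while every path from $F_k$ to $A_2$ in $D$ crosses the ring $A(0,t_k,1/2)$, so that $M(\Gamma(F_k,A_2,D))\le\omega_{n-1}\bigl(\log\tfrac{1}{2t_k}\bigr)^{1-n}\to 0$; the dichotomy fails. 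A correct proof must therefore use the size hypothesis, and in the classical argument it enters through Loewner-type lower bounds for connecting \emph{families} rather than individual connectors: one shows (via Proposition~\ref{pr2}, Proposition~\ref{pr4} and ring-crossing estimates, together with a chaining argument along a continuum joining $A_1$ to $A_2$) that if a path from $F_k$ has all initial segments of small $\rho_k$-length, then $2\rho_k$ is forced to be admissible for a connecting family of modulus bounded below by a fixed constant, which makes $\int_D\rho_k^n\,dm$ bounded away from zero. That mechanism, which is the real content of N\"akki's theorem, is absent from your sketch.
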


\medskip
The following statement may be found in~\cite[Lemma~4.3]{Vu}.

\medskip
\begin{proposition}\label{pr4}
{\it\, Let $D$ be an open half space or an open ball in ${\Bbb R}^n$
and let $E$ and $F$ be subsets of $D.$ Then $$M(\Gamma(E, F,
D))\geqslant \frac{1}{2}\cdot M(\Gamma(E, F, \overline{{\Bbb
R}^n}))\,.$$}
\end{proposition}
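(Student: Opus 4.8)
The plan is to exploit the conformal reflection naturally attached to $D$ together with the conformal invariance of the modulus noted earlier in the paper. Since $D$ is either an open half-space or an open ball, there is a M\"obius involution $\sigma:\overline{{\Bbb R}^n}\to\overline{{\Bbb R}^n}$ fixing $\partial D$ pointwise and interchanging $D$ with $D^*:=\overline{{\Bbb R}^n}\setminus\overline{D}$: for a half-space $\sigma$ is the orthogonal reflection in the bounding hyperplane (an isometry, $|\sigma^{\,\prime}|\equiv 1$), and for a ball it is the inversion in the bounding sphere (conformal, with $|J(x,\sigma)|=|\sigma^{\,\prime}(x)|^n$). Write $\Gamma:=\Gamma(E,F,\overline{{\Bbb R}^n})$ and $\Gamma_D:=\Gamma(E,F,D)$. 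Since $\Gamma_D\subset\Gamma$, monotonicity already gives $M(\Gamma_D)\le M(\Gamma)$; the content of the statement is the reverse estimate with the factor $1/2$, equivalently $M(\Gamma)\le 2\,M(\Gamma_D)$.

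To get this, I would start from an arbitrary $\rho\in{\rm adm}\,\Gamma_D$ and build a $\sigma$-symmetric competitor for $\Gamma$, namely
$$\rho^*(x)=\rho(x)\quad(x\in\overline{D}),\qquad \rho^*(x)=\rho(\sigma(x))\,|\sigma^{\,\prime}(x)|\quad(x\in D^*)\,.$$
By construction $\rho^*$ is invariant under $\sigma$, in the sense that reflecting any subpath across $\partial D$ leaves its $\rho^*$-length unchanged. Using the folding map $g$ equal to the identity on $\overline{D}$ and to $\sigma$ on $D^*$ (continuous because $\sigma|_{\partial D}={\rm id}$), every $\gamma\in\Gamma$ is carried to a path $\gamma^{\,\prime}=g\circ\gamma$ lying in $\overline{D}$ and joining $E$ to $F$, with
$$\int\limits_{\gamma}\rho^*\,|dx|=\int\limits_{\gamma^{\,\prime}}\rho^*\,|dx|=\int\limits_{\gamma^{\,\prime}}\rho\,|dx|\,.$$
Granting $\int_{\gamma^{\,\prime}}\rho\,|dx|\ge 1$, this shows $\rho^*\in{\rm adm}\,\Gamma$. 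Finally, the conformal change of variables $y=\sigma(x)$, under which $|\sigma^{\,\prime}(x)|^n\,dm(x)=dm(y)$, gives
$$\int\limits_{D^*}\rho(\sigma(x))^n\,|\sigma^{\,\prime}(x)|^n\,dm(x)=\int\limits_{D}\rho^n(y)\,dm(y)\,,$$
so that $\int_{\overline{{\Bbb R}^n}}(\rho^*)^n\,dm=2\int_{D}\rho^n\,dm$ (the set $\partial D$ being null). Hence $M(\Gamma)\le 2\int_{D}\rho^n\,dm$, and taking the infimum over $\rho\in{\rm adm}\,\Gamma_D$ yields $M(\Gamma)\le 2\,M(\Gamma_D)$, as required.

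The main obstacle is the step granting $\int_{\gamma^{\,\prime}}\rho\,|dx|\ge 1$: the folded path $\gamma^{\,\prime}$ lies in $\overline{D}$ and may meet $\partial D$ at interior points, so it need not itself lie in $\Gamma_D$, the family for which $\rho$ is admissible. What has to be verified is therefore that admissibility of $\rho$ for $\Gamma(E,F,D)$ already forces admissibility for $\Gamma(E,F,\overline{D})$, i.e. that enlarging the tube from $D$ to $\overline{D}$ with $E,F$ in the interior does not change matters. Since $E,F\subset D$ and $D$ is convex (half-space or ball), I would push $\gamma^{\,\prime}$ off the boundary by a boundary-fixing convex-combination bump $\gamma^{\,\prime}_\varepsilon$ toward a fixed interior point, which produces genuine members of $\Gamma_D$ converging uniformly to $\gamma^{\,\prime}$; the residual gap caused by the mere Borel regularity of $\rho$ is closed by discarding an exceptional subfamily of $\Gamma$ of zero modulus, for which weak admissibility of $\rho^*$ still suffices to compute $M(\Gamma)$. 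This reduction to the closure is the standard but delicate technical ingredient; once it is secured, the reflection and the conformal change of variables produce the constant $1/2$ automatically.
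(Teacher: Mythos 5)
Your core construction is correct, and it is in substance the classical argument: note that the paper does not prove Proposition~2.3 at all, but quotes it from Vuorinen [Vu, Lemma~4.3], and the proof in that source is precisely the reflection scheme you describe. The symmetrized competitor $\rho^*,$ the folding map $g$ (identity on $\overline{D},$ the M\"obius reflection $\sigma$ on the complementary side), and the conformal change of variables are exactly what produce the constant $\tfrac12$ there as well. Your computation $\int_{\overline{{\Bbb R}^n}}(\rho^*)^n\,dm=2\int_{D}\rho^n\,dm$ and the identity between the $\rho^*$-length of $\gamma$ and the $\rho$-length of $g\circ\gamma$ are sound (the ambiguity of $\rho^*$ on the null set $\partial D,$ the behaviour of $\sigma$ at the centre of inversion and at $\infty,$ and multiple traversals of folded arcs are all harmless for line integrals of locally rectifiable paths).

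The genuine gap is exactly the step you flag, and the patch you sketch does not close it. The bumped paths $\gamma^{\,\prime}_{\varepsilon}$ do lie in $\Gamma(E,F,D)$ (concavity of ${\rm dist}(\cdot,\partial D)$ on the convex set $D$ makes the boundary-fixing bump work), so $\int_{\gamma^{\,\prime}_{\varepsilon}}\rho\,|dx|\geqslant 1$ for every $\varepsilon;$ but to deduce $\int_{\gamma^{\,\prime}}\rho\,|dx|\geqslant 1$ you need upper semicontinuity of the $\rho$-length under uniform convergence of paths, and this fails for a general Borel $\rho.$ Indeed, since every path of $\Gamma(E,F,D)$ has its trace in $D,$ the admissibility of $\rho$ puts no constraint whatever on $\rho|_{\partial D};$ redefining $\rho$ to be $0$ on $\partial D$ preserves admissibility while the integral over a fold running along $\partial D$ for positive length can drop below $1,$ even though all the bumped integrals stay $\geqslant 1.$ Your appeal to ``discarding an exceptional subfamily of zero modulus'' is the right kind of device --- $M(\Gamma)$ is indeed unchanged by removing a null subfamily --- but you never exhibit the family, and the natural candidate (all $\gamma\in\Gamma$ whose fold meets $\partial D$) has positive modulus, so it cannot simply be discarded. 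What actually works is a two-step repair: first extend $\rho$ by $+\infty$ on $\partial D$ (free of charge in $L^n$ since $m(\partial D)=0$), which disposes of all folds spending positive length on $\partial D,$ because the family of locally rectifiable paths meeting a fixed Lebesgue-null set in a set of positive length has zero modulus; second, for folds meeting $\partial D$ only in a set of zero length one needs a genuine closure lemma to the effect that $M(\Gamma(E,F,\overline{D}))=M(\Gamma(E,F,D))$ for a half-space or a ball --- this is the actual content of the reduction used in the cited source, and it is a lemma with its own proof, not a consequence of uniform approximation of paths. Until that ingredient is supplied with a proof or a precise reference, your argument establishes the proposition only modulo its hardest step.
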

For a domain $D\subset {\Bbb R}^n,$ $n\geqslant 2,$ and a Lebesgue
measurable function $Q:{\Bbb R}^n\rightarrow [0, \infty],$
$Q(y)\equiv 0$ for $y\in{\Bbb R}^n\setminus f(D),$ we denote by
$\frak{F}_Q(D)$ the family of all open discrete mappings
$f:D\rightarrow {\Bbb R}^n$ such that
relations~(\ref{eq2*A})--(\ref{eqA2}) hold for each point $y_0\in
f(D).$ The following result holds (see~\cite[Theorem~1.1]{SSD}).

\medskip
\begin{proposition}\label{pr1}
{\it Let $n\geqslant 2,$ and let $Q\in L^1({\Bbb R}^n).$ Then for
any $x_0\in D$ and any $r_0>0$ such that $0<r_0<{\rm dist}(x_0,
\partial D)$ the inequality
\begin{equation}\label{eq2CB}
|f(x)-f(x_0)|\leqslant\frac{C_n\cdot (\Vert
Q\Vert_1)^{1/n}}{\log^{1/n}\left(1+\frac{r_0}{2|x-x_0|}\right)}
\end{equation}
holds for any $x, y\in B(x_0, r_0)$ and $f\in \frak{F}_Q(D),$ where
$\Vert Q\Vert_1$ denotes the $L^1$-norm of $Q$ in ${\Bbb R}^n,$ and
$C_n>0$ is some constant depending only on $n.$ In particular,
$\frak{F}_Q(D)$ is equicontinuous in $D.$}
\end{proposition}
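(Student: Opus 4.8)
\proof The plan is to localize at the image point $y_0:=f(x_0),$ to squeeze the modulus of a single spherical--ring family between a geometric lower bound and the inverse Poletsky upper bound, and then to solve the resulting inequality for $|f(x)-f(x_0)|.$ Fix $x\in B(x_0,r_0),$ write $\sigma:=|x-x_0|,$ and set $L(\sigma):=\max_{z\in S(x_0,\sigma)}|f(z)-y_0|,$ so that $|f(x)-f(x_0)|\leqslant L(\sigma).$ Since $f$ is open, the function $z\mapsto|f(z)-y_0|$ obeys the maximum principle, whence $f(\overline{B(x_0,\sigma)})\subset\overline{B(y_0,L(\sigma))};$ since $f$ is discrete we may also assume (shrinking $r_0$ if necessary) that $r_2:={\rm dist}\,(y_0,f(S(x_0,r_0)))>0,$ so that $f(S(x_0,r_0))\subset\overline{{\Bbb R}^n}\setminus B(y_0,r_2).$ Put $r_1:=L(\sigma)$ and assume $r_1<r_2,$ the opposite case being immediate. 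Then every path of the crossing family $\Gamma_0:=\Gamma(S(x_0,\sigma),S(x_0,r_0),A(x_0,\sigma,r_0))$ has an $f$--image running from $\overline{B(y_0,r_1)}$ to the complement of $B(y_0,r_2),$ hence $\Gamma_0$ is minorized by $\Gamma_f(y_0,r_1,r_2),$ and therefore
$$\omega_{n-1}\Big(\log\tfrac{r_0}{\sigma}\Big)^{1-n}=M(\Gamma_0)\leqslant M(\Gamma_f(y_0,r_1,r_2))\,.$$

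For the upper bound I would apply~(\ref{eq2*A})--(\ref{eqA2}) at $y_0$ with the admissible function $\eta(t)=\big(t\log(r_2/r_1)\big)^{-1}.$ Passing to the spherical means~(\ref{eq12}) and then estimating $|y-y_0|\geqslant r_1$ on the annulus, one gets
$$M(\Gamma_f(y_0,r_1,r_2))\leqslant\frac{\omega_{n-1}}{\log^{n}(r_2/r_1)}\int\limits_{r_1}^{r_2}\frac{q_{y_0}(t)}{t}\,dt\leqslant\frac{\|Q\|_1}{r_1^{\,n}\log^{n}(r_2/r_1)}\,.$$
Combining the two displays gives $r_1^{\,n}\log^{n}(r_2/r_1)\leqslant\frac{\|Q\|_1}{\omega_{n-1}}\big(\log(r_0/\sigma)\big)^{n-1}.$

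The decisive step is to show that, in the logarithmic scale, the image ring is at least as thick as the preimage ring, i.e. $\log(r_2/r_1)\geqslant\log\big(1+\tfrac{r_0}{2\sigma}\big).$ Granting this, and using that $\log\big(1+\tfrac{r_0}{2\sigma}\big)$ and $\log\tfrac{r_0}{\sigma}$ are comparable, the last inequality yields $r_1^{\,n}\log\big(1+\tfrac{r_0}{2\sigma}\big)\leqslant C_n^{\,n}\|Q\|_1;$ since $|f(x)-f(x_0)|\leqslant r_1=L(\sigma),$ this is precisely~(\ref{eq2CB}) with $C_n=\omega_{n-1}^{-1/n},$ and the equicontinuity of $\frak F_Q(D)$ follows at once because the right--hand side tends to $0$ as $\sigma\to0$ uniformly in $f.$ I expect this thickness comparison to be the main obstacle: it is the point at which the integrability $Q\in L^1({\Bbb R}^n)$ must be used essentially (the inverse inequality must be shown to forbid the map from compressing the modulus of the ring by more than a controlled amount), and it has to be combined with the elementary but slightly delicate bookkeeping guaranteeing $r_1<r_2$ and $r_2>0.$ The remaining conventions --- $Q\equiv0$ outside $f(D)$ and the reduction of the case $y_0=\infty$ to the origin through the inversion $\psi(y)=y/|y|^2$ --- are routine. \hfill $\Box$
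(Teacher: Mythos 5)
First, a point of comparison: the paper itself contains no proof of Proposition~\ref{pr1}; it is imported verbatim from \cite[Theorem~1.1]{SSD}, and the same proof scheme reappears inside the paper only in the proof of Lemma~\ref{lem1}. Your overall philosophy (squeeze the modulus of a ring family between a geometric lower bound and the inverse Poletsky upper bound) is the right one, and two of your three steps are sound: the minorization $\Gamma_0>\Gamma_f(y_0,r_1,r_2)$ with $M(\Gamma_0)=\omega_{n-1}(\log(r_0/\sigma))^{1-n}$, and the upper bound $M(\Gamma_f(y_0,r_1,r_2))\leqslant \Vert Q\Vert_1/\bigl(r_1^n\log^n(r_2/r_1)\bigr)$ obtained from (\ref{eq2*A})--(\ref{eqA2}) with $\eta(t)=\bigl(t\log(r_2/r_1)\bigr)^{-1}$. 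But the ``decisive step'' $\log(r_2/r_1)\geqslant\log\bigl(1+\frac{r_0}{2\sigma}\bigr)$ is not a gap you can hope to fill: it is false, and the case $r_1\geqslant r_2$ that you dismiss as ``immediate'' genuinely occurs and yields nothing. The quantity $r_2={\rm dist}\,(y_0,f(S(x_0,r_0)))$ depends on $f$ and admits no lower bound uniform over $\frak{F}_Q(D)$, because for non-injective members of the class the image of the outer sphere can return arbitrarily close to $y_0$. Concretely, take $n=2,$ $D=B(0,2r_0),$ $x_0=0$ and $f(z)=z(z-a)$ with $a$ real, $0<r_0-a\ll r_0$; this $f$ is open and discrete and lies in $\frak{F}_Q(D)$ with $Q=2\cdot\chi_{f(D)}\in L^1({\Bbb R}^n)$ (via Remark~\ref{rem1}), yet ${\rm dist}\,(0,f(S(0,r_0)))\leqslant r_0(r_0-a)$ while $r_1=L(\sigma)=\sigma(\sigma+a)$, so $r_2<r_1$ once $a$ is close enough to $r_0$ and your image ring collapses entirely. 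Since (\ref{eq2CB}) does hold for this $f$, it is the method, not the theorem, that breaks; in particular no use of $Q\in L^1$ can rescue a comparison between $L(\sigma)$ and the distance of $f(S(x_0,r_0))$ to $y_0$.

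The structural reason is that you paired the bounds the wrong way round: with the crossing family of the domain ring the lower bound carries a negative power of the logarithm, so you are forced to demand that the image ring be logarithmically thick, which is exactly what cannot be controlled. The argument of \cite{SSD} (like the proof of Lemma~\ref{lem1} here) does the opposite: the image ring is kept of fixed ratio, say $r_1=R/2,$ $r_2=R$ with $R:=|f(x)-f(x_0)|$, so that $\log(r_2/r_1)=\log 2$, while the domain side supplies a lower bound with a positive power of the logarithm. For this one constructs two continua both meeting every sphere $S(x_0,t),$ $\sigma<t<r_0$: one through $x$ whose image lies outside $B(y_0,R)$ --- obtained as a maximal $f$-lifting (Proposition~\ref{pr3}) of the ray issuing from $f(x)$ directed away from $y_0$, a lifting which must run to $\partial D$ and whose image by construction stays in $\{|y-y_0|\geqslant R\}$, so that no control of $f(S(x_0,r_0))$ is ever needed --- and one through $x_0$ whose image lies in $\overline{B(y_0,R/2)}$ (a component of $f^{-1}(B(y_0,R/2))$; handling the case when it does not reach $S(x_0,r_0)$ is where openness and discreteness, and the remaining work of \cite{SSD}, lie). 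For such a pair, estimates of V\"{a}is\"{a}l\"{a}--N\"{a}kki type (see \cite[Section~10]{Va}; this is the quantitative counterpart of the role played by Propositions~\ref{pr2}, \ref{pr4}, \ref{pr5} and Lemma~\ref{lem2} in this paper) give $M\geqslant c_n\log(r_0/\sigma)$, and played against your own upper bound this yields $R^n\log(r_0/\sigma)\leqslant C\Vert Q\Vert_1$, which is (\ref{eq2CB}) after adjusting constants. Your computation of the Poletsky upper bound and the deduction of equicontinuity from (\ref{eq2CB}) are fine; it is the geometric lower bound that must be rebuilt along these lines.
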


\medskip
Let $D\subset {\Bbb R}^n,$ $f:D\rightarrow {\Bbb R}^n$ be a discrete
open mapping, $\beta: [a,\,b)\rightarrow {\Bbb R}^n$ be a path, and
$x\in\,f^{\,-1}(\beta(a)).$ A path $\alpha: [a,\,c)\rightarrow D$ is
called a {\it maximal $f$-lifting} of $\beta$ starting at $x,$ if
$(1)\quad \alpha(a)=x\,;$ $(2)\quad f\circ\alpha=\beta|_{[a,\,c)};$
$(3)$\quad for $c<c^{\prime}\leqslant b,$ there is no a path
$\alpha^{\prime}: [a,\,c^{\prime})\rightarrow D$ such that
$\alpha=\alpha^{\prime}|_{[a,\,c)}$ and $f\circ
\alpha^{\,\prime}=\beta|_{[a,\,c^{\prime})}.$ If $\beta:[a,
b)\rightarrow\overline{{\Bbb R}^n}$ is a path and if
$C\subset\overline{{\Bbb R}^n},$ we say that $\beta\rightarrow C$ as
$t\rightarrow b,$ if the spherical distance $h(\beta(t),
C)\rightarrow 0$ as $t\rightarrow b$ (see
\cite[section~3.11]{MRV$_2$}), where $h(\beta(t),
C)=\inf\limits_{x\in C}h(\beta(t), x).$ The following assertion
holds (see~\cite[Lemma~3.12]{MRV$_2$}).

\medskip
\begin{proposition}\label{pr3}
{\it Let $f:D\rightarrow {\Bbb R}^n,$ $n\geqslant 2,$ be an open
discrete mapping, let $x_0\in D,$ and let $\beta: [a,\,b)\rightarrow
{\Bbb R}^n$ be a path such that $\beta(a)=f(x_0)$ and such that
either $\lim\limits_{t\rightarrow b}\beta(t)$ exists, or
$\beta(t)\rightarrow \partial f(D)$ as $t\rightarrow b.$ Then
$\beta$ has a maximal $f$-lifting $\alpha: [a,\,c)\rightarrow D$
starting at $x_0.$ If $\alpha(t)\rightarrow x_1\in D$ as
$t\rightarrow c,$ then $c=b$ and $f(x_1)=\lim\limits_{t\rightarrow
b}\beta(t).$ Otherwise $\alpha(t)\rightarrow \partial D$ as
$t\rightarrow c.$}
\end{proposition}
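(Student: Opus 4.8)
The plan is to follow the classical scheme for lifting under discrete open mappings. The cornerstone is the \emph{local} lifting property, which rests on the structure theory of such maps: since $f$ is open and discrete, every point $x\in D$ possesses arbitrarily small \emph{normal neighbourhoods} $U$, i.e. neighbourhoods with $\overline{U}\subset D$ compact, $U\cap f^{\,-1}(f(x))=\{x\}$ and $f(\partial U)=\partial f(U)$, on which $f|_U$ is a proper map onto a ball $V\ni f(x)$. On such a $U$ any path $\gamma:[s,d)\rightarrow V$ with $\gamma(s)=f(x)$ admits a lift starting at $x$ that either runs over all of $[s,d)$ or exits through $\partial U$. First I would record this local lifting statement; it is where the hypotheses ``open'' and ``discrete'' are really consumed.

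With local lifting in hand, existence of a maximal lifting is a Zorn's lemma argument. I would consider the family $\mathcal{A}$ of all $f$-liftings $\alpha:[a,c^{\,\prime})\rightarrow D$ of $\beta|_{[a,c^{\,\prime})}$ with $\alpha(a)=x_0$, partially ordered by extension. Local lifting at $x_0$ shows $\mathcal{A}\neq\varnothing$. Any chain is totally ordered, so its members are pairwise consistent and their union is again a lifting defined on $[a,\sup c^{\,\prime})$ and continuous there; hence every chain has an upper bound and Zorn yields a maximal element $\alpha:[a,c)\rightarrow D$. Properties $(1)$ and $(2)$ hold by construction, and property $(3)$ is precisely maximality of $\alpha$ in $\mathcal{A}$, so $\alpha$ is the asserted maximal $f$-lifting.

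The endpoint analysis is where the hypothesis on $\beta$ enters. Let $C:=\bigcap_{a\leqslant t<c}\overline{\alpha([t,c))}$ be the cluster set of $\alpha$ at $c$; as a nested intersection of continua in the compact space $\overline{{\Bbb R}^n}$ it is itself a continuum, and by continuity $f(C)\subseteq C(\beta,c)$, where $C(\beta,c)$ denotes the cluster set of $\beta$ at $c$. Suppose first $\alpha(t)\rightarrow x_1\in D$. Then $f(x_1)=\lim_{t\rightarrow c}\beta(t)$, and if $c<b$ I would lift $\beta|_{[c,c+\varepsilon)}$ through a normal neighbourhood of $x_1$ to extend $\alpha$ beyond $c$, contradicting maximality; hence $c=b$ and $f(x_1)=\lim_{t\rightarrow b}\beta(t)$. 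Suppose instead $\alpha$ does not converge to any point of $D$, and assume for contradiction that some $x_1\in C\cap D$ exists. When $c<b$, or when $c=b$ and $\lim_{t\rightarrow b}\beta(t)$ exists, the set $C(\beta,c)$ is a single point, so $f(C)$ is a single point and $C\cap D\subseteq f^{\,-1}(f(x_1))$, which is discrete; choosing a neighbourhood $U$ of $x_1$ meeting $f^{\,-1}(f(x_1))$ only in $x_1$ forces $U\cap C=\{x_1\}$, so $x_1$ is isolated in the continuum $C$, whence $C=\{x_1\}$ and $\alpha(t)\rightarrow x_1$, contradicting our assumption. When $c=b$ and $\beta(t)\rightarrow\partial f(D)$, the point $f(x_1)\in f(D)$ cannot belong to $C(\beta,b)\subseteq\partial f(D)$, an immediate contradiction. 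Thus $C\cap D=\varnothing$, i.e. $C\subseteq\partial D$, which by compactness of $\overline{{\Bbb R}^n}$ is exactly the assertion $\alpha(t)\rightarrow\partial D$.

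The main obstacle is the local lifting property underpinning the whole argument: proving that a proper discrete open map onto a ball admits the stated lifts (equivalently, developing enough of the normal-domain machinery) is the genuinely technical step, whereas the Zorn's lemma existence and the cluster-set dichotomy are then formal. A secondary point needing care is that the hypothesis on $\beta$ is used in exactly one place — to force $f(C)$ to degenerate to a single point, or else to produce the immediate boundary contradiction — and the argument genuinely fails without it, since an oscillating $\beta$ could lift to an oscillating $\alpha$ whose cluster set meets $D$ in more than one point.
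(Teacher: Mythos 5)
The paper does not actually prove this Proposition --- it is quoted verbatim from the literature (Martio--Rickman--V\"ais\"al\"a, \cite[Lemma~3.12]{MRV$_2$}), so your attempt can only be compared with the proof in the cited source. Your reconstruction follows exactly the classical route of that source: normal neighbourhoods supply local lifts, Zorn's lemma applied to partial liftings ordered by extension produces a maximal lifting (and your observation that condition $(3)$ of the definition is precisely maximality in the extension order is correct), and the terminal behaviour is settled by the cluster set $C=\bigcap_{a\leqslant t<c}\overline{\alpha([t,c))}$. Your endpoint analysis is complete and correct: $C$ is a continuum; when $c<b$, or when $c=b$ and $\lim_{t\rightarrow b}\beta(t)$ exists, the cluster set of $\beta$ degenerates, so $f(C\cap D)$ is a single point, discreteness makes any $x_1\in C\cap D$ isolated in the continuum $C$, forcing $C=\{x_1\}$; when $c=b$ and $\beta(t)\rightarrow\partial f(D)$, openness of $f(D)$ gives the immediate contradiction. (One cosmetic slip: you write $f(C)\subseteq C(\beta,c)$, but $f$ is only defined on $D$, so this should read $f(C\cap D)\subseteq C(\beta,c)$ --- which is all you use.) You also correctly isolate the single place where the hypothesis on $\beta$ is consumed.

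The one genuine gap is the local lifting property itself, which you assert but do not prove, and whose difficulty your formulation somewhat understates. Openness of $f$ yields, for each $t$ near $c$, \emph{points} near $x_1$ mapping to $\beta(t)$, but not a \emph{continuous selection} of such points; producing an actual lift segment starting at a prescribed point (possibly a branch point) is where the real work lies. The standard proofs run through the structure theory of discrete open maps: properness of $f|_U$ for a normal domain $U$ together with the Chernavskii--V\"ais\"al\"a theorem that the branch set has topological dimension at most $n-2$, so that $f$ restricted off the branch set is a covering and lifts are obtained there and then extended by a limiting argument. Note also that a na\"ive attempt to prove local lifting by the same Zorn-plus-cluster-set scheme is circular: extending a partial lift past its endpoint again requires an initial lift segment at a new point, which is the very statement being proved. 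Since you explicitly flag this lemma as the technical core rather than claiming it is easy, your write-up is an honest and structurally faithful reconstruction of the cited proof with one known-true but unproven input, rather than a flawed argument.
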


\medskip
The following statement may be found in~\cite[Lemma~1.3]{Sev$_2$}.

\medskip
\begin{proposition}\label{pr6}
{\it\, Let $Q:{\Bbb R}^n\rightarrow [0,\infty],$ $n\geqslant 2,$ be
a Lebesgue measurable function and let $x_0\in {\Bbb R}^n.$ Assume
that either of the following conditions holds

\noindent (a) $Q\in FMO(x_0),$

\noindent (b)
$q_{x_0}(r)\,=\,O\left(\left[\log{\frac1r}\right]^{n-1}\right)$ as
$r\rightarrow 0,$

\noindent (c) for some small $\delta_0=\delta_0(x_0)>0$ we have the
relations
\begin{equation}\label{eq5***B}
\int\limits_{\delta}^{\delta_0}\frac{dt}{tq_{x_0}^{\frac{1}{n-1}}(t)}<\infty,\qquad
0<\delta<\delta_0,
\end{equation}
and
\begin{equation}\label{eq5**}
\int\limits_{0}^{\delta_0}\frac{dt}{tq_{x_0}^{\frac{1}{n-1}}(t)}=\infty\,.
\end{equation}
Then  there exist a number $\varepsilon_0\in(0,1)$ and a function
$\psi(t)\geqslant 0$ such that the relation
\begin{equation}\label{eqlem21}
\int\limits_{\varepsilon<|x-b|<\varepsilon_0}Q(x)\cdot\psi^n(|x-b|)
\ dm(x)=o(I^n(\varepsilon, \varepsilon_0))\,,
\end{equation}
holds as $\varepsilon\rightarrow 0,$
where $\psi:(0, \varepsilon_0)\rightarrow [0, \infty)$ is some
function such that, for some $0<\varepsilon_1<\varepsilon_0,$
\begin{equation}\label{eqlem22}
0<I(\varepsilon, \varepsilon_0)
=\int\limits_{\varepsilon}^{\varepsilon_0}\psi(t)\,dt < \infty
\qquad \forall\quad\varepsilon \in(0, \varepsilon_1)\,.
\end{equation}}
\end{proposition}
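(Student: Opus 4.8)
The plan is to exhibit the weight $\psi$ explicitly in each of the three cases and then verify the two required assertions separately: that $I(\varepsilon,\varepsilon_0)$ is finite for every small $\varepsilon$ but tends to $\infty$ as $\varepsilon\to 0$, and that the weighted integral of $Q$ is $o(I^n)$. Here the centre $b$ of the annulus is the point $x_0$ at which the hypotheses on $Q$ are imposed. The common first step is a reduction by Fubini and the definition~(\ref{eq12}) of $q_{x_0}$: passing to spherical coordinates about $x_0$ turns the annular integral into a one-dimensional one,
\[
\int\limits_{\varepsilon<|x-x_0|<\varepsilon_0}Q(x)\,\psi^n(|x-x_0|)\,dm(x)=\omega_{n-1}\int\limits_{\varepsilon}^{\varepsilon_0}q_{x_0}(r)\,\psi^n(r)\,r^{n-1}\,dr\,.
\]
Since $n\geqslant 2$, once I show the right-hand side is $O\bigl(I(\varepsilon,\varepsilon_0)\bigr)$ together with $I(\varepsilon,\varepsilon_0)\to\infty$, the desired $o(I^n)$ follows at once from $I=o(I^{n})$.

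Case~(c) is the model computation and I would carry it out first. Set $\psi(t)=1/\bigl(t\,q_{x_0}^{1/(n-1)}(t)\bigr)$ on $(0,\varepsilon_0)$ with $\varepsilon_0\leqslant\delta_0$. Then $I(\varepsilon,\varepsilon_0)$ is precisely the integral appearing in~(\ref{eq5***B})--(\ref{eq5**}); it is finite for each $\varepsilon>0$ by~(\ref{eq5***B}) and diverges to $\infty$ as $\varepsilon\to 0$ by~(\ref{eq5**}). A direct algebraic simplification gives $q_{x_0}(r)\,\psi^n(r)\,r^{n-1}=\psi(r)$, so the displayed integral equals $\omega_{n-1}\,I(\varepsilon,\varepsilon_0)=o(I^{n})$, as needed.

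For cases~(a) and~(b) I would instead fix the universal weight $\psi(t)=1/\bigl(t\log(1/t)\bigr)$, choosing $\varepsilon_0<1/e$ so that $\log(1/t)>0$; then $I(\varepsilon,\varepsilon_0)=\log\bigl(\log(1/\varepsilon)/\log(1/\varepsilon_0)\bigr)$ is finite for each $\varepsilon$ and grows like $\log\log(1/\varepsilon)\to\infty$. In case~(b) the hypothesis yields a constant $C$ with $q_{x_0}(r)\leqslant C[\log(1/r)]^{n-1}$ near the origin, and substituting this bound collapses the integrand above to $C/\bigl(r\log(1/r)\bigr)$, so the annular integral is $\leqslant\omega_{n-1}C\,I(\varepsilon,\varepsilon_0)=o(I^{n})$.

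Case~(a) is where the real work lies and is the main obstacle, since the $FMO$ hypothesis provides no pointwise control of $q_{x_0}$ and the argument of~(b) is unavailable. The plan is to establish (or cite) the standard $FMO$ integral estimate
\[
\int\limits_{\varepsilon<|x-x_0|<\varepsilon_0}\frac{Q(x)\,dm(x)}{\bigl(|x-x_0|\log\frac{1}{|x-x_0|}\bigr)^{n}}=O\Bigl(\log\log\tfrac{1}{\varepsilon}\Bigr)\qquad(\varepsilon\to 0)
\]
for $\varepsilon_0$ sufficiently small. I would prove this by a dyadic decomposition of the annulus into rings $A_k=\{2^{-k-1}<|x-x_0|\leqslant 2^{-k}\}$, on which $\log(1/|x-x_0|)\asymp k$ and $|A_k|\asymp 2^{-kn}$. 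Writing $Q=(Q-\overline{Q}_k)+\overline{Q}_k$ over $B(x_0,2^{-k})$, the finite-mean-oscillation bound controls the average of $|Q-\overline{Q}_k|$, while a telescoping estimate for averages over successive concentric balls shows $\overline{Q}_k$ grows at most linearly in $k$; the contribution of $A_k$ is then $O(k^{1-n})$. Summing over the $\asymp\log(1/\varepsilon)$ rings gives $O(\log\log(1/\varepsilon))$ (the borderline case being $n=2$, where $\sum k^{-1}\asymp\log K$). Since this is $O\bigl(I(\varepsilon,\varepsilon_0)\bigr)$ and $I\to\infty$, the weighted integral is again $o(I^{n})$, completing case~(a) and hence the proof.
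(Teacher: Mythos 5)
Your proof is correct and is essentially the same as the paper's: the paper does not prove Proposition~2.6 at all but quotes it from \cite[Lemma~1.3]{Sev$_2$} (cf.\ \cite{MRSY}, Ch.~13), whose proof makes exactly your choices --- $\psi(t)=\frac{1}{t\log(1/t)}$ in cases (a) and (b), with the dyadic ring decomposition and telescoping estimate of the ball averages yielding the $FMO$ bound $O\bigl(\log\log\frac{1}{\varepsilon}\bigr)=O(I)$, and $\psi(t)=\frac{1}{t\,q_{x_0}^{1/(n-1)}(t)}$ in case (c), where the polar-coordinate integrand collapses to $\psi$ and $I(\varepsilon,\varepsilon_0)\rightarrow\infty$ gives $o(I^n)$ since $n\geqslant 2$. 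The only detail worth making explicit is the convention $0\cdot\infty=0$ on the set where $q_{x_0}(t)\in\{0,\infty\}$ in case (c), which the cited source handles the same way.
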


\section{Main Lemmas}

\begin{lemma}\label{lem2}
{\it\, Let $D$ be a domain in ${\Bbb R}^n,$ let $x_0\in D,$ let $A$
be a (non-degenerate) continuum in $D,$ and let $\varepsilon_1>0$ be
such that $B(x_0, \varepsilon_1)\subset D.$ Let $r>0$ and let $C_j,$
$j=1,2,\ldots, $ be a sequence of continua in $B(x_0,
\varepsilon_1)$ such that $h(C_j)\geqslant r,$
$h(C_j)=\sup\limits_{x, y\in C_j}h(x, y).$ Then there is $R_0>0$
such that $$M(\Gamma(C_j, A, D))\geqslant R_0\qquad\forall\,\,j\in
{\Bbb N}\,.$$}
\end{lemma}
\begin{proof}
Let $A_1$ be an arbitrary (non-degenerate) continuum in $B(x_0,
\varepsilon_1).$ By Proposition~\ref{pr2}, there is $R_*>0$ such
that $M(\Gamma(C_j, A_1, \overline{{\Bbb R}^n}))\geqslant R_*$ for
any $j\in {\Bbb N}.$ Now, by Proposition~\ref{pr4} $M(\Gamma(C_j,
A_1, B(x_0, \varepsilon_1)))\geqslant \frac{1}{2}\cdot M(\Gamma(C_j,
A_1, \overline{{\Bbb R}^n}))\geqslant R_*/2.$ Now $M(\Gamma(C_j,
A_1, D))\geqslant R_*/2$ for any $j\in {\Bbb N}.$ Finally,
$M(\Gamma(C_j, A, D))\geqslant R_*/2$ for any $j\in {\Bbb N}$ by
Proposition~\ref{pr5}. For the completeness of the proof, we may put
$R_0:=R_*.$~$\Box$
\end{proof}

\medskip
\begin{lemma}\label{lem1}
{\it Let $D$ be a domain in ${\Bbb R}^n,$ $n\geqslant 2,$ let $A$ be
a set in $D,$ and let $B(x_0, \varepsilon_1)\subset A$ for some
$\varepsilon_1>0.$

\medskip
Assume that, $Q\in L^1({\Bbb R}^n)$ and, in addition, for any
$y_0\in \overline{{\Bbb R}^n}$ there is
$\varepsilon_0=\varepsilon_0(y_0)>0$ and a Lebesgue measurable
function $\psi:(0, \varepsilon_0)\rightarrow [0,\infty]$ such that
\begin{equation}\label{eq7***} I(\varepsilon,
\varepsilon_0):=\int\limits_{\varepsilon}^{\varepsilon_0}\psi(t)\,dt
< \infty\quad \forall\,\,\varepsilon\in (0, \varepsilon_0)\,,\quad
I(\varepsilon, \varepsilon_0)\rightarrow
\infty\quad\text{при}\quad\varepsilon\rightarrow 0\,,
\end{equation}
and, in addition,
\begin{equation} \label{eq3.7.2}
\int\limits_{A(y_0, \varepsilon, \varepsilon_0)}
Q(y)\cdot\psi^{\,n}(|y-y_0|)\,dm(x) = o(I^n(\varepsilon,
\varepsilon_0))\,,\end{equation}
as $\varepsilon\rightarrow 0,$ where $A(y_0, \varepsilon,
\varepsilon_0)$ is defined in (\ref{eq1**}).
Then there is $r_0>0,$ which does not depend on $f,$ such that
\begin{equation}\label{eq1}
f(B(x_0, \varepsilon_1))\supset B(f(x_0), r_0)\qquad \forall\,\,f\in
\frak{F}_{E, \delta}(D)\,. \end{equation}}
\end{lemma}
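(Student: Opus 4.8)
The plan is to prove Lemma~\ref{lem1} by contradiction, exploiting the uniform lower bound on moduli coming from Lemma~\ref{lem2} against the upper bound furnished by the inverse Poletsky inequality~(\ref{eq2*A}) together with the integral smallness condition~(\ref{eq3.7.2}). Suppose no such $r_0>0$ exists. Then I can find a sequence $f_m\in\frak{F}_{E,\delta}(D)$ and points $z_m$ with $z_m\to f_m(x_0)$ (more precisely, points $z_m\notin f_m(B(x_0,\varepsilon_1))$ with $|z_m-f_m(x_0)|\to 0$) witnessing the failure. Setting $y_m:=f_m(x_0)$, the idea is that the omitted value $z_m$ lying close to $y_m$ forces a spherical ring $A(y_m,r_1^{(m)},r_2^{(m)})$ around $y_m$ to be swept out by the image $f_m(B(x_0,\varepsilon_1))$ in a way that the family $\Gamma_{f_m}(y_m,r_1^{(m)},r_2^{(m)})$ separates the continuum $C_m:=\overline{f_m^{-1}(\text{inner boundary})}$ from a fixed reference continuum $A$, so that Lemma~\ref{lem2} applies.

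First I would fix the reference continuum. Since $B(x_0,\varepsilon_1)\subset A\subset D$, I can pick a fixed non-degenerate continuum, say a small closed segment or sphere inside $B(x_0,\varepsilon_1)$, to serve as the set "$A$" in Lemma~\ref{lem2}; call it $A_0$. The key geometric step is to produce, for each $m$, a continuum $C_m\subset B(x_0,\varepsilon_1)$ with chordal diameter bounded below by a fixed $r>0$ and such that $f_m(C_m)$ connects the two boundary spheres of a thin ring around $y_m$. Here the point $x_0\in C_m$ and the presence of the omitted value should force $f_m(C_m)$ to reach outward to radius $\varepsilon_0(y_0)$ while $f_m(x_0)=y_m$ sits at the center; the lower bound $h(C_m)\geqslant r$ will follow from equicontinuity (Proposition~\ref{pr1}): because $f_m$ is uniformly equicontinuous on $\overline{B(x_0,\varepsilon_1/2)}$, the preimage of a set of fixed image-diameter cannot have arbitrarily small diameter, so $C_m$ stays non-degenerate uniformly. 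Lemma~\ref{lem2} then yields $M(\Gamma(C_m,A_0,D))\geqslant R_0$ for all $m$, with $R_0$ independent of $m$.

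On the other side, I would apply the inverse Poletsky inequality at the point $y_m$. Choosing $\eta(t)=\psi(t)/I(\varepsilon,\varepsilon_0)$ on the ring $(\varepsilon,\varepsilon_0)$ normalizes~(\ref{eqA2}) to equal $1$, and substituting into~(\ref{eq2*A}) gives
\[
M(\Gamma_{f_m}(y_m,\varepsilon,\varepsilon_0))\leqslant \frac{1}{I^n(\varepsilon,\varepsilon_0)}\int\limits_{A(y_m,\varepsilon,\varepsilon_0)\cap f_m(D)} Q(y)\,\psi^n(|y-y_m|)\,dm(y)\,,
\]
which by hypothesis~(\ref{eq3.7.2}) tends to $0$ as $\varepsilon\to0$, uniformly in the class since $\psi$ and $\varepsilon_0$ depend only on $y_0$ and $Q$, not on $f$. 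The remaining task is the bridge: I must verify that $\Gamma(C_m,A_0,D)$ is minorized by (is contained in, up to the modulus inequality) the family $\Gamma_{f_m}(y_m,\varepsilon,\varepsilon_0)$, so that the fixed lower bound $R_0$ confronts an upper bound going to $0$.

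The main obstacle, and the step I would spend the most care on, is precisely this geometric/lifting argument connecting the two path families. The cleanest route is to argue that any path $\gamma$ joining $C_m$ to $A_0$ inside $D$ must, under $f_m$, cross the ring $A(y_m,\varepsilon,\varepsilon_0)$ from inner to outer sphere; this requires knowing that $f_m(A_0)$ lies outside radius $\varepsilon_0$ from $y_m$ (choosing $A_0$ and the radius $\varepsilon$ appropriately using equicontinuity and the normalization $z_m\to y_m$) while $C_m$ maps near $y_m$. If that separation holds, then $f_m\circ\gamma\in\Gamma(S(y_m,\varepsilon),S(y_m,\varepsilon_0),A(y_m,\varepsilon,\varepsilon_0))$, so $\gamma\in\Gamma_{f_m}(y_m,\varepsilon,\varepsilon_0)$, giving $\Gamma(C_m,A_0,D)\subset\Gamma_{f_m}(y_m,\varepsilon,\varepsilon_0)$ and hence $M(\Gamma(C_m,A_0,D))\leqslant M(\Gamma_{f_m}(y_m,\varepsilon,\varepsilon_0))$ by monotonicity of the modulus. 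Combining, $R_0\leqslant o(1)$, a contradiction. Handling the case $y_0=\infty$ via the inversion $\psi(y)=y/|y|^2$ as indicated in the text, and checking that all choices ($\varepsilon$, $A_0$, $r$) can be made uniformly over the class, completes the argument.
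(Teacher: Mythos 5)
Your high-level skeleton does match the paper's: argue by contradiction, get a uniform lower bound for $M(\Gamma(C_m,\cdot,D))$ from Lemma~\ref{lem2}, and beat it with the inverse Poletsky inequality~(\ref{eq2*A}) tested against $\eta=\psi/I$, which tends to $0$ by~(\ref{eq3.7.2}). But both geometric constructions on which your argument rests fail, and the step you yourself flag as the main obstacle cannot be repaired in the way you suggest. The decisive point is that you never use the defining property $h(f_m(E))\geqslant\delta$ of the class $\frak{F}_{E,\delta}(D)$, and without it the statement is false: the conformal maps $f_m(x)=x/m$ on $D={\Bbb B}^n$ satisfy~(\ref{eq2*A})--(\ref{eqA2}) at every point with $Q=\chi_{{\Bbb B}^n}\in L^1({\Bbb R}^n)$ (which also satisfies~(\ref{eq7***})--(\ref{eq3.7.2}) with $\psi(t)=1/t$), yet $f_m(B(0,1/2))=B(0,1/(2m))$ contains no fixed ball $B(f_m(0),r_0)$. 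Consequently, your choice of the reference continuum $A_0$ \emph{inside} $B(x_0,\varepsilon_1)$ is doomed: you need $f_m(A_0)$ to stay a definite distance from $y_m$, but equicontinuity (Proposition~\ref{pr1}) only prevents images of small sets from being large — it can never push $f_m(A_0)$ \emph{away} from $y_m=f_m(x_0)$, and in the example above the whole image of the ball collapses to a point. The paper gets the separation from $E$: since $h(f_m(E))\geqslant\delta$, there are $z_m\in E$ with $f_m(z_m)$ outside a fixed ball $B(y_0,\varepsilon_2)$; passing to a limit point $z_0\in E$ and using equicontinuity, the fixed continuum $E_1=\overline{B(z_0,\delta)}$ (around a point of $E$, not inside $B(x_0,\varepsilon_1)$) satisfies $f_m(E_1)\cap\overline{B(y_0,\varepsilon_2)}=\varnothing$ for all $m$.

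The second gap is your construction of $C_m$. You want $f_m(C_m)$ itself to join the two spheres of a ring around $y_m$, claiming the omitted value "forces $f_m(C_m)$ to reach outward to radius $\varepsilon_0(y_0)$". It forces no such thing: an omitted value gives no lower bound on the size of $f_m(B(x_0,\varepsilon_1))$ (again the scaling example), and your fallback definition $C_m=\overline{f_m^{-1}(\text{inner boundary})}$ need not even be connected, so Lemma~\ref{lem2} would not apply to it. What the omitted value does force — and this is the key idea, absent from your sketch — concerns liftings, via Proposition~\ref{pr3}: the maximal $f_m$-lifting $\alpha_m$ starting at $x_0$ of the tiny segment $\beta_m$ joining $f_m(x_0)$ to the omitted value $y_m$ cannot terminate at an interior point of $B(x_0,\varepsilon_1)$ (that would produce a preimage of $y_m$), hence $\alpha_m(t)\rightarrow S(x_0,\varepsilon_1)$. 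Thus $C_m:=|\alpha_m|$ is large in the \emph{domain}, $h(C_m)\geqslant h(x_0,S(x_0,\varepsilon_1))$, with no appeal to equicontinuity, while its \emph{image} is small, lying in $B(y_0,2^{-k})$ along a subsequence. The ring is then crossed not by $f_m(C_m)$ but by the images of the connecting paths: every path of $\Gamma(C_{m_k},E_1,D)$ has image joining $B(y_0,2^{-k})$ to the complement of $B(y_0,\varepsilon_2)$, hence has a subpath in the ring family, so $\Gamma(C_{m_k},E_1,D)$ is minorized by $\Gamma_{f_{m_k}}(y_0,2^{-k},\varepsilon_2)$ and its modulus is $o(1)$, contradicting the bound $R_0$ of Lemma~\ref{lem2}. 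In short, your proposal inverts the roles of domain and image; with that inversion the minorization you correctly identify as the crux has no starting point.
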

\begin{remark}
If $y_0=\infty,$ the relation~(\ref{eq3.7.2}) must be understood by
the using the inversion $\psi(y)=\frac{y}{|y|^2}$ at the origin. In
other words, instead of
$$\int\limits_{A(y_0, \varepsilon, \varepsilon_0)}
Q(y)\cdot\psi^{\,n}(|y-y_0|)\,dm(y) = o(I^n(\varepsilon,
\varepsilon_0))$$
we need to consider the condition
$$\int\limits_{A(0, \varepsilon, \varepsilon_0)}
Q\left(\frac{y}{|y|^2}\right)\cdot\psi^{\,n}(|y|)\,dm(y) =
o(I^n(\varepsilon, \varepsilon_0))\,.$$
\end{remark}
{\it Proof of Lemma~\ref{lem1}.} Let us prove the lemma by
contradiction. Assume that its conclusion is wrong, i.e., the
relation~(\ref{eq1}) does not hold for any $r_0>0.$ Then for any
$m\in{\Bbb N}$ there is $y_m\in {\Bbb R}^n$ and $f_m\in \frak{F}_{E,
\delta}(D)$ such that $|f_m(x_0)-y_m|<1/m$ and $y_m\not\in
f_m(B(x_0, \varepsilon_1)).$ Due to the compactness of
$\overline{{\Bbb R}^n}$ we may consider that $y_m\rightarrow y_0$ as
$m\rightarrow\infty,$ where $y_0\in \overline{{\Bbb R}^n}.$ Then
also $f_m(x_0)\rightarrow y_0$ as $m\rightarrow\infty.$ We may
consider that $y_0\ne\infty.$

Since by the assumption $h(f_m(E))\geqslant \delta$ for any
$m\in{\Bbb N}$ and $d(f_m(E))\geqslant h(f_m(E)),$ where $d(f_m(E))$
denotes the Euclidean diameter of $f_m(E),$ there is
$\varepsilon_2>0$ such that
\begin{equation}\label{eq2}
f_m(E)\setminus B(y_0, \varepsilon_2)\ne \varnothing,\qquad
m=1,2,\ldots \,.
\end{equation}
By~(\ref{eq2}), there is $w_m=f_m(z_m)\in \overline{{\Bbb
R}^n}\setminus B(y_0, \varepsilon_2),$ where $z_m\in E.$ Since $E$
is a continuum, $\overline{{\Bbb R}^n}$ is a compactum and the set
$\overline{{\Bbb R}^n}\setminus B(y_0, \varepsilon_2)$ is closed, we
may consider that $z_m\rightarrow z_0\in E$ as $m\rightarrow\infty$
and $w_m\rightarrow w_0\in \overline{{\Bbb R}^n}\setminus B(y_0,
\varepsilon_2).$ Obviously, $w_0\ne y_0.$

\medskip
By Proposition~\ref{pr1} the family $f_m$ is equicontinuous. Now,
for any $\varepsilon>0$ there is $\delta=\delta(z_0)>0$ such that
$h(f_m(z_0), f_m(z))<\varepsilon$ whenever $|z-z_0|\leqslant
\delta.$ Then, by the triangle inequality
\begin{equation}\label{eq3}
h(f_m(z), w_0)\leqslant h(f_m(z), f_m(z_0))+ h(f_m(z_0), f_m(z_m))+
h(f_m(z_m), w_0)<3\varepsilon
\end{equation}
for $|z-z_0|<\delta,$ some $M_1\in {\Bbb N}$ and all $m\geqslant
M_1.$ We may consider that latter holds for any $m=1,2,\ldots .$
Since $w_0\in \overline{{\Bbb R}^n}\setminus B(y_0, \varepsilon_2),$
we may choose $\varepsilon>0$ such that $\overline{B_h(w_0,
3\varepsilon)}\cap \overline{B(y_0, \varepsilon_2)}=\varnothing,$
where $B_h(w_0, \varepsilon)=\{w\in \overline{{\Bbb R}^n}: h(w,
w_0)<\varepsilon\}.$ Then~(\ref{eq3}) implies that
\begin{equation}\label{eq4}
f_m(E_1)\cap \overline{B(y_0, \varepsilon_2)}=\varnothing,\qquad
m=1,2,\ldots \,,
\end{equation}
where $E_1:=\overline{B(z_0, \delta)}.$

Join the points $y_m$ and $f_m(x_0)$ by a segment $\beta_m:[0,
1]\rightarrow \overline{B(f_m(x_0), 1/m)}$ such that
$\beta_m(0)=f_m(x_0)$ and $\beta_m(0)=y_m.$ Let $\alpha_m,$
$\alpha_m:[0, c_m)\rightarrow B(x_0, \varepsilon_1),$ be a maximal
$f_m$-lifting of $\beta_m$ in $B(x_0, \varepsilon_1)$ starting at
$x_0.$ The lifting $\alpha_m$ exists by Proposition~\ref{pr3}. By
the same Proposition either $\alpha_m(t)\rightarrow x_1\in B(x_0,
\varepsilon_1)$ as $t\rightarrow c_m-0$ (in this case, $c_m=1$ and
$f_m(x_1)=y_m$), or $\alpha_m(t)\rightarrow S(x_0, \varepsilon_1)$
as $t\rightarrow c_m.$ Observe that, the first situation is
excluded. Indeed, if $f_m(x_1)=y_m,$ then $y_m\in f_m(B(x_0,
\varepsilon_1)),$ that contradicts the choice of $y_m.$ Thus,
$\alpha_m(t)\rightarrow S(x_0, \varepsilon_1)$ as $t\rightarrow
c_m.$ Observe that, $\overline{|\alpha_m|}$ is a continuum in
$\overline{B(x_0, \varepsilon_1)}$ and
$h(\overline{|\alpha_m|})\geqslant h(0, S(x_0, \varepsilon_1)).$
Let us to apply Lemma~\ref{lem2} for $A:=E_1:=B(z_0, \delta),$
$C_m:=|\alpha_m|$ and $r=h(0, S(x_0, \varepsilon_1)).$ By this lemma
we may find $R_0>0$ such that
\begin{equation}\label{eq5}
M(\Gamma(\overline{|\alpha_m|}, E_1, D))\geqslant R_0\,,\qquad
m=1,2,\ldots\,.
\end{equation}

Let us show that the relation~(\ref{eq5}) contradicts the definition
of the mapping $f_m$ in~(\ref{eq2*A})--(\ref{eqA2}). Indeed, since
$f_m(x_0)\rightarrow y_0$ as $m\rightarrow\infty,$ for any $k\in
{\Bbb N}$ there is a number $m_k\in {\Bbb N}$ such that
\begin{equation}\label{eq6}
B(f_{m_k}(x_0), 1/k)\subset B(y_0, 2^{\,-k})\,.
\end{equation}
Since $|\beta_m|\in B(f_m(x_0), 1/m),$ by~(\ref{eq6}) we obtain that
\begin{equation}\label{eq7}
|\beta_{m_k}|\subset B(y_0, 2^{\,-k})\,,\qquad k=1,2,\ldots\,.
\end{equation}
Let $k_0\in {\Bbb N}$ be such that $2^{\,-k}<\varepsilon_2,$ where
$\varepsilon_2$ is a number from~(\ref{eq4}), and let
$\Gamma_k:=\Gamma(|\alpha_{m_k}|, E_1, D).$ In this case, we observe
that
\begin{equation}\label{eq3G}
f_{m_k}(\Gamma_{k})>\Gamma(S(y_0, \varepsilon_2), S(y_0, 2^{\,-k}),
A(y_0, 2^{\,-k}, \varepsilon_2))\,,
\end{equation}
see Figure~\ref{fig1} for the scheme of the proof.
\begin{figure}[h]
\centerline{\includegraphics[scale=0.45]{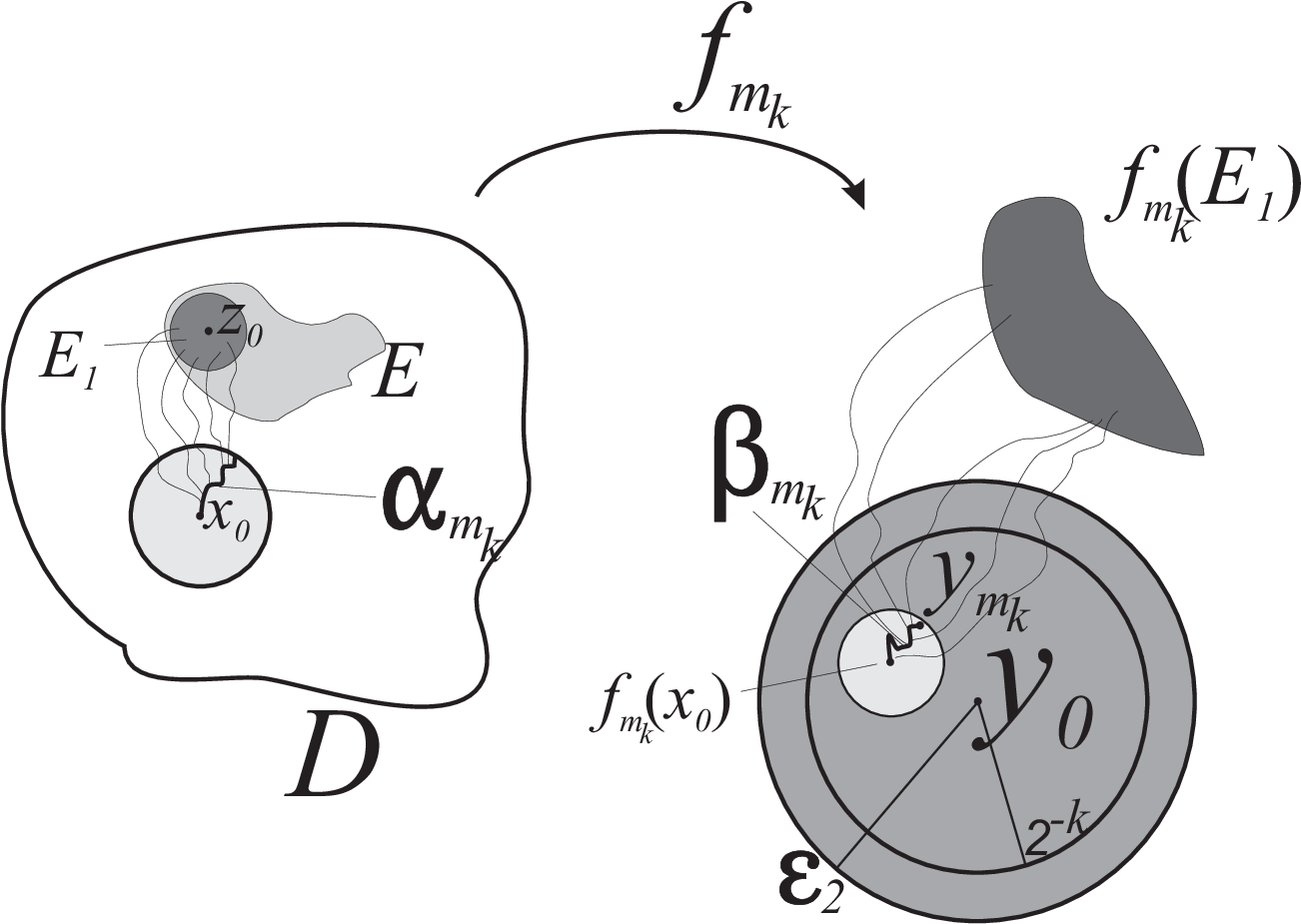}} \caption{To
the proof of Lemma~\ref{lem1}}\label{fig1}
\end{figure}
Indeed, let $\widetilde{\gamma}\in f_{m_k}(\Gamma_{k}).$ Then
$\widetilde{\gamma}(t)=f_{m_k}(\gamma(t)),$ where $\gamma\in
\Gamma_{k},$ $\gamma:[0, 1]\rightarrow D,$ $\gamma(0)\in
|\alpha_{m_k}|,$ $\gamma(1)\in E_1.$ By the relation~(\ref{eq4}), we
obtain that $f_{m_k}(\gamma(0))\in {\Bbb R}^n\setminus B(y_0,
\varepsilon_2).$ In addition, by~(\ref{eq7}) we have that
$f_{m_k}(\gamma(1))\in B(y_0, 2^{\,-k})\subset B(y_0,
\varepsilon_2)$ for $k\geqslant k_0.$ Thus,
$|f_{m_k}(\gamma(t))|\cap B(y_0, \varepsilon_2)\ne\varnothing \ne
|f_{m_k}(\gamma(t))|\cap ({\Bbb R}^n\setminus B(y_0,
\varepsilon_2)).$ Now, by~\cite[Theorem~1.I.5.46]{Ku} we obtain
that, there is $0<t_1<1$ such that $f_{m_k}(\gamma(t_1))\in S(y_0,
\varepsilon_2).$ Set $\gamma_1:=\gamma|_{[t_1, 1]}.$ We may consider
that $f_{m_k}(\gamma(t))\in B(y_0, \varepsilon_2)$ for any
$t\geqslant t_1.$ Arguing similarly, we obtain $t_2\in [t_1, 1]$
such that $f_{m_k}(\gamma(t_2))\in S(y_0, 2^{\,-k}).$ Put
$\gamma_2:=\gamma|_{[t_1, t_2]}.$ We may consider that
$f_{m_k}(\gamma(t))\not\in B(y_0, 2^{\,-k})$ for any $t\in [t_1,
t_2].$ Now, a path $f_{m_k}(\gamma_2)$ is a subpath of
$f(\gamma)=\widetilde{\gamma},$ which belongs to $\Gamma(S(y_0,
2^{\,-k}), S(y_0, \varepsilon_2), A(y_0, 2^{\,-k}, \varepsilon_2)).$
The relation~(\ref{eq3G}) is established.

\medskip
It follows from~(\ref{eq3G}) that
\begin{equation}\label{eq3H}
\Gamma_{k}>\Gamma_{f_{m_k}}(S(y_0, 2^{\,-k}), S(y_0, \varepsilon_2),
A(y_0, 2^{\,-k}, \varepsilon_2))\,.
\end{equation}
Since $I(\varepsilon, \varepsilon_0)\rightarrow\infty$ as
$\varepsilon\rightarrow 0,$ we may consider that $I(2^{\,-k},
\varepsilon_2)>0$ for sufficiently large $k\in {\Bbb N}.$ Set
$$\eta_{k}(t)=\left\{
\begin{array}{rr}
\psi(t)/I(2^{\,-k}, \varepsilon_2), & t\in (2^{\,-k}, \varepsilon_2)\,,\\
0,  &  t\not\in (2^{\,-k}, \varepsilon_2)\,,
\end{array}
\right. $$
where $I(2^{\,-k},
\varepsilon_2)=\int\limits_{2^{\,-k}}^{\varepsilon_2}\,\psi (t)\,
dt.$ Observe that
$\int\limits_{2^{\,-k}}^{\varepsilon_2}\eta_{k}(t)\,dt=1.$ Now, by
the relations~(\ref{eq3.7.2}) and~(\ref{eq3H}), and due to the
definition of $f_{m_k}$ in~(\ref{eq2*A})--(\ref{eqA2}), we obtain
that
$$M(\Gamma_{k})=M(\Gamma(|\alpha_{m_k}|, E_1, D)
)\leqslant M(\Gamma_{f_{m_k}}(S(y_0, 2^{\,-k}), S(y_0,
\varepsilon_2), A(y_0, 2^{\,-k}, \varepsilon_2)))\leqslant$$
\begin{equation}\label{eq3J}
\leqslant \frac{1}{I^n(2^{\,-k}, \varepsilon_2)}\int\limits_{A(y_0,
2^{\,-k}, \varepsilon_2)}
Q(y)\cdot\psi^{\,n}(|y-y_0|)\,dm(y)\rightarrow 0\quad \text{as}\quad
k\rightarrow\infty\,.
\end{equation}
The relation~(\ref{eq3J}) contradicts with~(\ref{eq5}). The
contradiction obtained above proves the lemma.~$\Box$

\medskip
{\it Proof of Theorem~\ref{th1}} immediately follows by
Lemma~\ref{lem1} and Proposition~\ref{pr6}.~$\Box$

\medskip
\begin{remark}\label{rem3}
If, under the conditions of Theorem~\ref{th1}, the mapped domain
$f(D)=D^{\,\prime}$ is fixed and bounded, then the condition $Q\in
L^1({\Bbb R^n})$ may be slightly weakened.

Given domain $D, D^{\,\prime}\subset {\Bbb R}^n,$ $n\geqslant 2,$ a
continuum $E\subset D,$ $\delta>0$ and a Lebesgue measurable
function $Q:{\Bbb R}^n\rightarrow [0, \infty]$ we denote by
$\frak{F}_{E, \delta}(D, D^{\,\prime})$ the family of all mapping
$f:D\rightarrow {\Bbb R}^n,$ $n\geqslant 2,$ satisfying
relations~(\ref{eq2*A})--(\ref{eqA2}) at any point $y_0\in
\overline{{\Bbb R}^n}$ such that $h(f(E))\geqslant \delta.$ The
following statement holds.

\medskip
{\it Assume that, $D^{\,\prime}$ is bounded and for each point
$y_0\in D^{\,\prime}$ and for every
$0<r_1<r_2<r_0:=\sup\limits_{y\in D^{\,\prime}}|y-y_0|$ there is a
set $E\subset[r_1, r_2]$ of a positive linear Lebesgue measure such
that the function $Q$ is integrable with respect to
$\mathcal{H}^{n-1}$ over the spheres $S(y_0, r)$ for every $r\in E.$
In addition, assume that one of the following conditions hold:

\medskip
1) $Q\in FMO(\overline{{\Bbb R}^n});$

\medskip
2) for any $y_0\in \overline{{\Bbb R}^n}$ there is $\delta(y_0)>0$
such that~(\ref{eq5D}) holds. Then there is $r_0>0,$ which does not
depend on $f,$ such that
$$f(B(x_0, \varepsilon_1))\supset B(f(x_0), r_0)\qquad \forall\,\,f\in \frak{F}_{E,
\delta}(D, D^{\,\prime})\,.$$}
The proof of this statement is exactly the same as the proof of
Theorem~\ref{th1}. The above condition on the function $Q,$
replacing the condition $Q\in L^1({\Bbb R^n}),$ ensures
equicontinuity of the family of mappings $\frak{F}_{E, \delta}(D,
D^{\,\prime})$ (see~\cite[Theorem~1.1]{SevSkv}). In all other
respects, the proof scheme is the same.
\end{remark}

\medskip
{\it Proof of Theorem~\ref{th2}.} Assume that $f$ is not a constant.
Now, the lightness of $f$ follows by~\cite[Theorem]{Sev$_1$}. It
remains to show that $f$ is open. Let $A$ be an open set and let
$x_0\in A.$ We need to show that, there is $\varepsilon^*>0$ such
that $B(f(x_0), \varepsilon^*)\subset f(A).$ Since $A$ is open,
there is $\varepsilon_1>0$ such that $\overline{B(x_0,
\varepsilon_1)}\subset A.$

Since $f$ is not constant, there are $a, b\in D$ such that $f(a)\ne
f(b).$ Let us join the points $a$ and $b$ by a path $\gamma$ in $D.$
We set $E:=|\gamma|.$ Now, $h(f_m(a), f_m(b))\geqslant
\frac{1}{2}\cdot h(f(a), f(b)):=\delta$ for sufficiently large $m\in
{\Bbb N}.$

By Theorem~\ref{th1} there is $r_0>0,$ which does not depend on $m,$
such that
$B(f_m(x_0), r_0)\subset f_m(B(x_0, \varepsilon_1)),$
$m=1,2,\ldots.$

Set $\varepsilon^*:=r_0/2.$ Let $y\in B(f(x_0), r_0/2).$ Since by
the assumption $f_m(x)\rightarrow f(x)$ locally uniformly in $D,$ by
the triangle inequality we obtain that
$$|f_m(x_0)-y|\leqslant |f_m(x_0)-f(x_0)|+|f(x_0)-y|<r_0$$
for sufficiently large $m\in {\Bbb N}.$ Thus, $y\in B(f_m(x_0),
r_0)\subset f_m(B(x_0, \varepsilon_1)).$ Consequently, $y=f_m(x_m)$
for some $x_m\in B(x_0, \varepsilon_1).$ Due to the compactness of
$\overline{B(x_0, \varepsilon_1)},$ we may consider that
$x_m\rightarrow z_0\in \overline{B(x_0, \varepsilon_1)}$ as
$m\rightarrow\infty.$ By the continuity of $f$ in $A,$ since
$\overline{B(x_0, \varepsilon_1)}\subset A,$ we obtain that
$f(x_m)\rightarrow f(z_0)$ as $m\rightarrow\infty.$ So, we have that
$f(x_m)\rightarrow f(z_0)$ as $m\rightarrow\infty$ and
simultaneously $y=f_m(x_m)$ for sufficiently large $m\in {\Bbb N}.$
Thus
$$|y-f(z_0)|=|f_m(x_m)-f(z_0)|\leqslant$$
$$\leqslant |f_m(x_m)-f(x_m)|+|f(x_m)-f(z_0)|\rightarrow 0\,,$$
$m\rightarrow\infty.$ Thus, $y=f(z_0)\in f(\overline{B(x_0,
\varepsilon_1)})\subset f(A).$ So, $y\in f(A),$ i.e., $B(f(x_0),
r_0/2)\subset f(A),$ as required.~$\Box$

\medskip
\begin{theorem}\label{th3}
{\it\, Let $D$ be a domain in ${\Bbb R}^n,$ $n\geqslant 2.$ Let
$f_j:D\rightarrow {\Bbb R}^n,$ $n\geqslant 2,$ $j=1,2,\ldots,$ be a
sequence of open discrete mappings satisfying the
conditions~(\ref{eq2*A})--(\ref{eqA2}) at any point $y_0\in
\overline{{\Bbb R}^n}$ and converging to some mapping
$f:D\rightarrow \overline{{\Bbb R}^n}$ as $j\rightarrow\infty$
locally uniformly in $D$ with respect to the chordal metric $h.$
Assume that the conditions on the function $Q$ from
Theorem~\ref{th1} hold. Then either $f$ is a constant in
$\overline{{\Bbb R}^n}$, or $f$ is light and open mapping
$f:D\rightarrow{\Bbb R}^n.$}
\end{theorem}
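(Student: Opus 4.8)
The plan is to reduce Theorem~\ref{th3} to Theorem~\ref{th2}. I assume from the outset that $f$ is not constant (otherwise the first alternative already holds), and I aim to prove that $f$ never attains the value $\infty$, i.e. $f(D)\subset{\Bbb R}^n$. Once this is established, the chordal local uniform convergence will be seen to coincide with Euclidean local uniform convergence on compacta, so that the whole situation falls under Theorem~\ref{th2}, from which both the lightness and the openness of $f$ will be inherited without further work.

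The first and central step is to rule out the value $\infty$, and here the decisive tool is the \emph{Euclidean} oscillation estimate of Proposition~\ref{pr1} rather than any Koebe-type covering. Each $f_j$ is an open discrete map satisfying~(\ref{eq2*A})--(\ref{eqA2}) at every point of $f_j(D)$, so Proposition~\ref{pr1} applies to each $f_j$ with the common constant $\|Q\|_1$ (the values of $Q$ off $f_j(D)$ do not enter~(\ref{eq2*A})). Fixing $x_0\in D$ and $r_*>0$ with $\overline{B(x_0,r_*)}\subset D,$ estimate~(\ref{eq2CB}) yields a single bound $M_0=C_n(\|Q\|_1)^{1/n}/\log^{1/n}(3/2)$ such that $|f_j(x)-f_j(x_0)|\leqslant M_0$ for every $x\in B(x_0,r_*)$ and every $j.$ Consequently, if $f(x_0)=\infty,$ that is $|f_j(x_0)|\to\infty,$ then $|f_j(x)|\geqslant|f_j(x_0)|-M_0\to\infty$ uniformly in $x\in B(x_0,r_*),$ hence $h(f_j(x),\infty)\to 0$ uniformly there and $f\equiv\infty$ on $B(x_0,r_*).$ Thus $f^{-1}(\infty)$ is open; it is also closed, since $f,$ being a locally uniform chordal limit of continuous maps, is continuous into $\overline{{\Bbb R}^n}.$ As $D$ is connected and $f\not\equiv\infty,$ we conclude $f^{-1}(\infty)=\varnothing,$ i.e. $f:D\rightarrow{\Bbb R}^n.$

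The second step upgrades the mode of convergence. Since $f:D\rightarrow{\Bbb R}^n$ is continuous, $f(K)$ is a compact subset of ${\Bbb R}^n$ for every compact $K\subset D,$ say $|f|\leqslant L$ on $K.$ Choosing $L'>L,$ the chordal distance between the compact sets $\overline{B(0,L)}$ and $\{w:|w|\geqslant L'\}\cup\{\infty\}$ is positive, so the uniform chordal convergence $f_j\to f$ on $K$ forces $f_j(K)\subset B(0,L')$ for all large $j.$ On the bounded set $B(0,L')$ the chordal and Euclidean metrics are comparable, whence $f_j\to f$ uniformly in the Euclidean metric on $K.$ Therefore $f_j\to f$ locally uniformly in $D$ in the Euclidean sense.

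Finally, all hypotheses of Theorem~\ref{th2} are met: $\{f_j\}$ is a sequence of open discrete maps satisfying~(\ref{eq2*A})--(\ref{eqA2}) at every $y_0\in\overline{{\Bbb R}^n},$ converging locally uniformly in the Euclidean metric to $f:D\rightarrow{\Bbb R}^n,$ under the same conditions on $Q.$ Since $f$ is non-constant, Theorem~\ref{th2} yields that $f$ is light and open, which completes the proof. I expect the main obstacle to be the first step: the temptation is to exclude $\infty$ by a covering argument at the point $\infty$ (as in Theorem~\ref{th1}), but such an argument only shows that $f$ covers a neighbourhood of $\infty$ and produces no contradiction; the right mechanism is instead the uniform Euclidean modulus of continuity~(\ref{eq2CB}), which forces $f^{-1}(\infty)$ to be open and hence, by connectedness, empty.
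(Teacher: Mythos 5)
Your proof is correct, and it diverges from the paper's in exactly the one step that carries the real weight: showing that a non-constant limit $f$ omits the value $\infty.$ The paper argues via its Koebe-type Theorem~\ref{th1}: it obtains the uniform Euclidean covering $B(f_m(x_0), r_0)\subset f_m(B(x_0,\varepsilon_1))$ and then asserts that this yields a uniform \emph{chordal} covering $B_h(f_m(x_0), r_*)\subset f_m(B(x_0,\varepsilon_1)),$ from which $B_h(f(x_0), r_*/2)\subset f_m(B(x_0,\varepsilon_1))\subset{\Bbb R}^n$ and hence $f(x_0)\ne\infty.$ You instead use the uniform modulus of continuity~(\ref{eq2CB}) from Proposition~\ref{pr1} to show that $f^{-1}(\infty)$ is open, hence empty by connectedness once $f$ is non-constant, and only then pass from chordal to Euclidean locally uniform convergence and invoke Theorem~\ref{th2} for lightness and openness --- the same final reduction as in the paper. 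Your route buys something genuine: the paper's Euclidean-to-chordal conversion is justified only when the centers $f_m(x_0)$ stay bounded, because as soon as $|f_m(x_0)|$ is large the chordal ball $B_h(f_m(x_0), r_*)$ of any fixed radius $r_*$ contains $\infty,$ which never lies in $f_m(B(x_0,\varepsilon_1))\subset{\Bbb R}^n;$ and boundedness of the sequence $f_m(x_0)$ is equivalent to $f(x_0)\ne\infty,$ i.e.\ to the very conclusion being sought. So the paper's argument, as written, is circular (or at least gapped) precisely in the critical case $f(x_0)=\infty,$ whereas your equicontinuity-plus-connectedness mechanism handles that case directly; it could in fact serve as a repair of the paper's proof. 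You also make explicit a small step the paper leaves silent: that once $f$ is known to be finite-valued, chordal locally uniform convergence upgrades to Euclidean locally uniform convergence on compacta, which is needed before Theorem~\ref{th2} can legitimately be applied.
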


\medskip
\begin{proof}
Assume that $f$ is not a constant. Then there are $a, b\in D$ such
that $f(a)\ne f(b).$ Let us join the points $a$ and $b$ by a path
$\gamma$ in $D.$ We set $E:=|\gamma|.$ Now, $h(f_m(a),
f_m(b))\geqslant \frac{1}{2}\cdot h(f(a), f(b)):=\delta$ for
sufficiently large $m\in {\Bbb N}.$

Let $x_0\in D$ and let $y_0=f(x_0).$ By Theorem~\ref{th1} there is
$r_0>0,$ which does not depend on $m,$ such that
$B(f_m(x_0), r_0)\subset f_m(B(x_0, \varepsilon_1)),$
$m=1,2,\ldots.$ Then also $B_h(f_m(x_0), r_*)\subset f_m(B(x_0,
\varepsilon_1)),$ $m=1,2,\ldots,$ for some $r_*>0.$ Let $y\in
B_h(y_0, r_*/2)=B_h(f(x_0), r_*/2).$ By the converges of $f_m$ to
$f$ and by the triangle inequality, we obtain that
$$h(y, f_m(x_0))\leqslant h(y, f(x_0))+h(f(x_0), f_m(x_0))<r_*/2+r_*/2=r_*$$
for sufficiently large $m\in {\Bbb N}.$ Thus,
$$B_h(f(x_0), r_*/2)\subset B_h(f_m(x_0), r_*)\subset f_m(B(x_0,
\varepsilon_1))\subset {\Bbb R}^n\,.$$
In particular, $y_0=f(x_0)\in {\Bbb R}^n,$ as required. The
lightness and the openness of $f$ follows by
Theorem~\ref{th2}.~$\Box$
\end{proof}

\medskip
{\bf Open problem.} Is it possible to assert that, under the
conditions of Theorems~\ref{th2} and \ref{th3}, the mapping $f$ is
open and discrete?


CONTACT INFORMATION

\medskip
{\bf \noindent Evgeny Sevost'yanov} \\
{\bf 1.} Zhytomyr Ivan Franko State University,  \\
40 Bol'shaya Berdichevskaya Str., 10 008  Zhytomyr, UKRAINE \\
{\bf 2.} Institute of Applied Mathematics and Mechanics\\
of NAS of Ukraine, \\
19 Henerala Batyuka Str., 84 116 Slavyansk,  UKRAINE\\
esevostyanov2009@gmail.com

\medskip
{\bf \noindent Valery Targonskii} \\
Zhytomyr Ivan Franko State University,  \\
40 Bol'shaya Berdichevskaya Str., 10 008  Zhytomyr, UKRAINE \\
w.targonsk@gmail.com

\end{document}